\theoremstyle{plain}
\newtheorem{theorem}{Theorem}[section]
\newtheorem{proposition}[theorem]{Proposition}
\newtheorem{lemma}[theorem]{Lemma}
\newtheorem{corollary}[theorem]{Corollary}
\newtheorem*{theorem*}{Theorem}
\theoremstyle{definition}
\theoremstyle{remark}
\newtheorem{remark}[theorem]{Remark}
\DeclareMathOperator{\im}{im}
\DeclareMathOperator{\dom}{dom}
\newcommand{\CC}{\mathbb{C}}
\newcommand{\ZZ}{\mathbb{Z}}  %
\newcommand{\Z}{\ZZ} 
\newcommand{\R}{\mathbb{R}}
\title{\LARGE\bf Choreographies with Dihedral Symmetry in the Planar 
$n$-Body Problem}
\author[1]{Juan Manuel Sánchez-Cerritos\thanks{Corresponding author: \texttt{jmsc@xanum.uam.mx}}}
\affil[1]{Departamento de Matemáticas, Universidad Autónoma Metropolitana, Ciudad de México, 09310, México}
\date{} 
\begin{document}
\maketitle
\begin{abstract}
We prove the existence of planar $D_n$--equivariant choreographies in the $n$--body problem with homogeneous potential of degree $-\alpha$, $0<\alpha<2$. 
Each body follows the same closed path, rotated and time--shifted, forming a  choreography whenever the winding number $W$ is coprime with $n$. 
Using Mawhin's coincidence degree, we establish collision--free periodic solutions under a simple nonresonance condition. 
The proof relies on the spectral structure of the linearized operator, symmetry--induced separation of the bodies, and uniform energy bounds ensuring compactness of the nonlinear term. 
This provides a topological route to choreographies beyond variational and numerical frameworks.
\end{abstract}

\noindent\textbf{Keywords:} choreographies, coincidence degree, Mawhin, $n$–body problem.\\


\section{Introduction}
\label{sec:intro}

The planar $n$--body problem in Celestial Mechanics with a homogeneous potential of degree $-\alpha$, $0<\alpha<2$, is a classical field where dynamical analysis, bifurcation theory, and variational methods meet. Among its remarkable configurations are the \emph{choreographies}, introduced by Chenciner and Montgomery~\cite{ChencinerMontgomery2000}, in which all bodies move along the same closed curve in the plane, uniformly shifted in time. The discovery of the celebrated “figure--eight’’ choreography for three equal masses opened an active research line combining symmetry principles, action--minimization techniques, and topological arguments.

Subsequent studies have revealed numerous families of choreographies: Simó~\cite{Simo2001} explored a wide range of them through numerical computations; Ferrario and Terracini~\cite{FerrarioTerracini2004} applied variational principles with symmetry constraints; Kapela and Zgliczyński~\cite{KapelaZgliczynski2003} provided computer--assisted rigorous proofs of existence; Chen and Ouyang~\cite{ChenOuyang2004} investigated bifurcations from central configurations, among others. Many of these solutions exhibit polygonal or dihedral symmetries, suggesting that choreographies with $D_n$ symmetry may have implicitly emerged in earlier studies.

In this paper we adopt a complementary perspective: we establish the existence of $D_n$--equivariant choreographies by means of the \emph{Mawhin coincidence degree theorem}~\cite{Mawhin1979}. This fundamental result in nonlinear analysis applies to operator equations of the form
\[
L u = N(u),
\]
where $L$ is a linear Fredholm operator of index zero and $N$ is an $L$--compact nonlinear perturbation. The theorem guarantees the existence of solutions under suitable a~priori bounds, providing a topological alternative to variational and perturbative methods. Although this framework has been applied to various nonlinear differential equations, its use in Celestial Mechanics and in the context of choreographies remains relatively unexplored. Our goal is to fill this methodological gap and to show that Mawhin’s degree provides a unified scheme to handle symmetry, compactness, and collision exclusion simultaneously.

The analysis relies on two geometric ingredients:
\begin{enumerate}
\item \textbf{Polar description of $D_n$ symmetry.}  
Each trajectory can be represented in polar form $u(t)=r(t)(\cos t,\sin t)$, 
where the radial function $r(t)$ encodes the dihedral symmetry pattern. 
In particular, $D_n$--equivariance corresponds to a periodic and reflection--symmetric 
behavior of $r(t)$, a property that later ensures a uniform separation between the bodies.
\item \textbf{Winding number around the origin.}  
Beyond the basic case where the curve winds once around the origin ($W=1$),
we also consider higher winding numbers $W>1$, which generate more intricate families of choreographies.
\end{enumerate}

Mawhin’s coincidence degree has proved to be a robust tool for establishing periodic solutions of nonlinear differential equations~\cite{Mawhin1972,Mawhin1977,Mawhin1979,CapiettoMawhinZanolin1992,Ramos2020,Santos2021,Liu2023}, including problems with gyroscopic or resonance effects analogous to those appearing in the $n$--body equations.

The main contribution of this paper is to prove, under a natural nonresonance condition, the existence of $D_n$--equivariant choreographies for any winding number $W\ge1$ such that $W$ and $n$ are coprime. This arithmetic restriction ensures that the configuration corresponds to a single shared orbit rather than multiple identical sub--orbits. Formally, we establish the following result.

\begin{theorem*}[Existence of $D_n$--equivariant choreographies]
Let $n\ge3$ and $W\ge1$ be coprime integers ($\gcd(W,n)=1$), and consider the planar $n$--body problem with homogeneous potential $r^{-\alpha}$, $0<\alpha<2$. 
Assume the nonresonance condition $\Omega \neq \pm k\,\tfrac{2\pi}{T}$ for all $k\in\mathbb N$. 
Then there exists at least one $T$--periodic function $u\in H^2_{\mathrm{per}}([0,T];\mathbb R^2)$ such that
\[
L u = N(u),
\]
where $L:H^2_{\mathrm{per}}\to L^2_{\mathrm{per}}$ is the linear Fredholm operator associated with the rotating frame. 
The corresponding configuration
\[
q_k(t)=R_{2\pi k/n}\,u\!\left(t+\tfrac{kT}{n}\right),\qquad k=0,\dots,n-1,
\]
defines a collision--free $D_n$--equivariant choreography with winding number $W$.
\end{theorem*}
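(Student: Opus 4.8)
The plan is to reformulate the symmetry‑constrained choreography problem as a single coincidence equation $Lu=N(u)$ on a loop space carrying the $D_n$‑action, and then run Mawhin's continuation scheme. I would first fix the functional setting: put $X=H^2_{\mathrm{per}}([0,T];\R^2)$, $Z=L^2_{\mathrm{per}}([0,T];\R^2)$, let $J$ be the standard rotation by $\pi/2$, let $L u=\ddot u+2\Omega J\dot u-\Omega^2 u$ be the rotating–frame operator, and let $N(u)$ be the map assembling the total force exerted on the reference body $q_0=u$ by the $n-1$ companions $q_k=R_{2\pi k/n}\,u(\cdot+kT/n)$, so that a fixed point of $Lu=N(u)$ is exactly a reduced choreography. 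Because both $L$ and $N$ are $D_n$‑equivariant, I would restrict the whole problem to the fixed‑point subspace $X^{D_n}\subset X$ of maps with the prescribed dihedral symmetry; then every solution automatically produces the configuration $q_k$ in the statement, and all degree computations are carried out inside $X^{D_n}$ (equivariant coincidence degree). Writing $u(t)=r(t)(\cos(Wt\,2\pi/T),\sin(Wt\,2\pi/T))$ makes the symmetry visible at the level of the radial profile $r$, which is the device used later to separate the bodies.

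Next I would establish the Fredholm structure. Diagonalising $L$ in the Fourier basis $e^{2\pi i k t/T}$ (after complexifying $\R^2$, with $J$ acting as $\pm i$) gives the symbol $-(k\tfrac{2\pi}{T}\mp\Omega)^2$ on the $k$‑th mode, so a mode lies in $\ker L$ precisely when $\Omega=\pm k\tfrac{2\pi}{T}$; the nonresonance hypothesis therefore forces $\ker L=\{0\}$, $L$ is an isomorphism $X^{D_n}\to Z^{D_n}$ with bounded inverse (in particular a Fredholm operator of index zero), and the projectors $P,Q$ in Mawhin's formalism are trivial. Then I would verify that $N$ is $L$‑compact on any bounded open $\mathcal O\subset X^{D_n}$ whose closure is collision‑free: on such a set all denominators $\dist(q_i,q_j)$ are bounded below, so $N$ is continuous and bounded from $\overline{\mathcal O}$ into $Z$, and $L^{-1}N$ is completely continuous because of the compact embedding $H^2_{\mathrm{per}}\hookrightarrow C^1$.

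The crux is the construction of an admissible set, that is a bounded open $\mathcal O\subset X^{D_n}$, staying away from the collision set, such that no solution of the homotopy $Lu=\lambda N(u)$, $\lambda\in(0,1]$, reaches $\partial\mathcal O$. Pairing the equation with $u$ and with its mean, using Euler's homogeneity identity $\nabla U(q)\cdot q=-\alpha\,U(q)$, the conserved Jacobi‑type quantity of the rotating frame, and the invertibility of $L$, I would bound $\int_0^T U\,dt$ and hence $\|u\|_{H^1}$, and then bootstrap to $\|u\|_{H^2}\le C\|N(u)\|_Z$. The exclusion of collisions is where the dihedral symmetry does the essential work: the $D_n$‑constraint forces the $n$ bodies into angular sectors rotated by $2\pi/n$ from one another, so a collision can occur only through $r\to 0$ (a central/total collision) or $r\to\infty$, and both are ruled out by the energy bounds together with the singularity of $U$, yielding a uniform two‑sided bound $0<\rho_1\le r(t)\le\rho_2$. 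I expect this simultaneous control of compactness and collision avoidance to be the main obstacle, precisely because the quadratic form $\langle Lu,u\rangle$ is indefinite (a saddle, not a minimum), so the bounds cannot come from coercivity and must be squeezed out of the equation itself and the rigidity of the symmetry.

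Finally, with such an annular region $\mathcal O$ in hand I would compute $\deg_{\mathrm{LS}}\!\big(I-L^{-1}N,\mathcal O,0\big)$ and show it is nonzero: near the inner boundary the force term dominates and pushes the radius outward, near the outer boundary the linear part controls the dynamics, and a compression/expansion (squeezing) argument on the cone of admissible radial profiles — or, equivalently, a homotopy through a solvable model central‑force problem sharing the same a priori bounds — gives a degree equal to $\pm1$. Mawhin's coincidence degree theorem then yields a solution $u\in X^{D_n}$ of $Lu=N(u)$ lying in the collision‑free set $\mathcal O$. A short closing argument checks that $q_k(t)=R_{2\pi k/n}\,u(t+kT/n)$ is a genuine choreography: the two‑sided bound on $r$ keeps the whole configuration collision‑free for all $t$, and the coprimality $\gcd(W,n)=1$ guarantees that the $n$ rotated and time‑shifted copies sweep out the single closed curve $\{u(t):t\in[0,T]\}$ once collectively, rather than $\gcd(W,n)$ coincident sub‑loops, so the solution is an honest $D_n$‑equivariant choreography of winding number $W$.
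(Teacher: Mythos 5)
Your overall architecture is the same as the paper's: the spaces $X=H^2_{\mathrm{per}}$, $Z=L^2_{\mathrm{per}}$ restricted to the $D_n$-symmetric subspace, Fourier diagonalization of $L$ with symbol eigenvalues $(\omega k\mp\Omega)^2$ so that nonresonance makes $L$ an isomorphism with $P=Q=0$, $L$-compactness of $N$ from the compact embedding, Euler's identity $\langle N(u),u\rangle=-\alpha\,U(u)$ for the a priori estimate, and the dihedral symmetry as the mechanism for separation. The genuine gap is in your a priori bounds. You assert that the quadratic form $\langle Lu,u\rangle$ is indefinite (``a saddle'') and that the bounds therefore ``cannot come from coercivity,'' replacing coercivity by an unspecified ``Jacobi-type'' conservation argument. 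The premise is false: integrating by parts on periodic loops gives $\langle -\ddot u-2\Omega J\dot u+\Omega^2u,\,u\rangle_{L^2}=\|\dot u+\Omega Ju\|_{L^2}^2\ge 0$, a perfect square (with your opposite sign convention it is $-\|\dot u+\Omega Ju\|_{L^2}^2$, i.e.\ semidefinite either way, never a saddle), and on zero-mean loops it is coercive, $\ge c\,\|u\|_{H^1}^2$ when $|\Omega|<\omega$, by the periodic Poincaré inequality --- this is exactly the paper's Lemma on coercivity of $L$, and it is what, combined with Euler's identity and the symmetry-induced separation, produces the $\lambda$-independent $H^1$ bound for $Lu=\lambda N(u)$. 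Your substitute is not an argument: a Jacobi integral is conserved along each fixed-$\lambda$ trajectory, but you never show how it controls $\int_0^T U\,dt$ or $\|u\|_{H^1}$ uniformly in $\lambda$, so the central estimate of the whole continuation scheme is left unproved.

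The second gap is collision exclusion. Even granting a bound on $\int_0^T U\,dt$, this does not yield a pointwise lower bound $r(t)\ge\rho_1>0$: the integral of $U$ can remain finite while $r(t)$ approaches zero on a short time interval, so ``energy bounds together with the singularity of $U$'' does not rule out total collapse. The paper's route is different and more elementary: in the symmetric class one has the exact identity $q_k(t)-q_0(t)=(R_{2\pi k/n}-I)\,u(t)$, hence $\|q_k(t)-q_0(t)\|=2\|u(t)\|\sin(\pi k/n)$, so all mutual distances are proportional to the distance to the origin (independently of $W$), and the lower radial bound is then imposed through the choice of the admissible annular domain $\rho<\|u\|_{H^1}<R$, with the inner boundary excluded by a separate small-norm argument rather than by an energy estimate. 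Relatedly, your degree computation (cone compression on radial profiles, or a homotopy to a solvable central-force model) is both undeveloped and unnecessary for this scheme: once the boundary of the admissible set carries no solutions of $Lu=\lambda N(u)$, the paper simply contracts $\lambda$ to $0$ and uses that $L$ is an isomorphism to get degree $1$. So while your plan is directionally the paper's plan, the two load-bearing steps --- the uniform bound and the exclusion of collisions on the boundary --- are asserted rather than proved, and the first rests on an incorrect claim about the sign structure of $L$.
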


\medskip

Section~\ref{sec:symmetry} introduces the $D_n$ symmetry, the notion of winding number, and the reduction of the $n$--body system to a single functional equation for the generating curve. 
Section~\ref{sec:functional} formulates the problem in a rigorous functional setting, defining the periodic Sobolev (Hilbert) spaces and the linear and nonlinear operators that lead to the abstract equation $L u = N(u)$. 
Section~\ref{sec:mawhinframework} presents Mawhin’s coincidence framework, providing the topological degree setting in which the existence argument is carried out. 
Section~\ref{sec:operatorL} develops the spectral analysis of the linear operator $L:H^2_{\mathrm{per}}\to L^2_{\mathrm{per}}$, establishing its Fredholm and coercive properties under the nonresonance condition. 
In Section~\ref{sec:apriori}, we derive uniform energy estimates for the homotopy $L u = \lambda N(u)$ that guarantee boundedness and collision exclusion. 
Section~\ref{sec:operatorN} analyses the nonlinear operator $N$, proving its continuity, compactness, and local Lipschitz bounds on bounded sets. 
Finally, the concluding Section~\ref{sec:mawhin} applies Mawhin’s coincidence degree to combine all these ingredients and establish the existence of $D_n$--equivariant, collision--free choreographies for any coprime pair $(W,n)$.

\section{Dihedral symmetry $D_n$ and problem formulation}
\label{sec:symmetry}

\subsection{$D_n$–equivariant choreographies and the winding number $W$}

We consider the planar $n$--body problem with equal masses $m>0$, whose positions
$q_i:\mathbb{R}\to\mathbb{R}^2$ satisfy Newton’s equations
\begin{equation}\label{eq:newton}
m\ddot q_i(t)
= \sum_{\substack{j=0 \\ j\neq i}}^{n-1}
\frac{m^2\,(q_j(t)-q_i(t))}{\|q_j(t)-q_i(t)\|^{\alpha+2}},
\qquad i=0,\ldots,n-1,
\end{equation}
where $0<\alpha<2$ is the exponent of the homogeneous potential $V(r)=r^{-\alpha}$.
We seek periodic solutions of~\eqref{eq:newton} exhibiting the \emph{choreographic symmetry}, namely that all bodies follow the same closed trajectory $u(t)$ with a fixed time shift.

Let $R_{2\pi/n}$ denote the rotation in the plane by angle $2\pi/n$, 
and fix the reflection 
\[
S=\begin{pmatrix}1&0\\[2pt]0&-1\end{pmatrix},
\]
so that $SJS=-J$, where $J=\begin{pmatrix}0&-1\\1&0\end{pmatrix}$.
Let $T>0$ denote the period.

We define a \emph{choreographic configuration} by
\begin{equation}\label{eq:choreo}
q_i(t) = R_{2\pi i/n}\,u\!\left(t+\frac{iT}{n}\right),
\qquad i=0,\ldots,n-1,
\end{equation}
where $u:\mathbb{R}\to\mathbb{R}^2$ is a sufficiently regular $T$--periodic curve.
This condition imposes a discrete symmetry isomorphic to the dihedral group $D_n$,
generated by the rotation $R_{2\pi/n}$ and the space--time involution 
$\mathcal S:u(t)\mapsto S\,u(-t)$.
The set of such curves forms the closed subspace
\[
X_{D_n} =
\Bigl\{
u\in H^2_{\mathrm{per}}([0,T];\mathbb{R}^2) :
u\!\bigl(t+\tfrac{T}{n}\bigr) = R_{2\pi/n}u(t),\;
u(-t)=S\,u(t)
\Bigr\}.
\]
The embedding $H^2_{\mathrm{per}}\hookrightarrow C^1_{\mathrm{per}}$ guarantees that
the pointwise evaluations in~\eqref{eq:choreo} are well defined.  
In this space, a single function $u$ completely determines the motion of the $n$ bodies.

\medskip
The \emph{winding number} $W\in\mathbb{N}$ records how many full turns the curve $u(t)$
performs around the origin during one period $T$.
Writing $u(t)=r(t)e^{i\theta(t)}$, we set
\[
W = \frac{\theta(T)-\theta(0)}{2\pi} \in\mathbb{Z}.
\]
The parameter $W$ controls the topology of the generating curve and will be fixed throughout the analysis.
When $W\ge1$, the trajectories may have self--intersections and a
“wrapped polygon’’ structure, analogous to limacón--type choreographies.

\medskip
The arithmetic relation between $W$ and $n$ determines the nature of the choreography.
Indeed, the $D_n$ symmetry imposes
\[
u\!\left(t+\tfrac{T}{n}\right)=R_{2\pi/n}\,u(t),
\]
so that after $n$ iterations one winds $W$ times around the origin.
If $W$ and $n$ are coprime, the action generated by
\[
(t,u)\mapsto \bigl(t+\tfrac{T}{n},\,R_{2\pi/n}u\bigr)
\]
is transitive on the $n$ bodies, hence they all traverse the same planar curve.
Conversely, when $W$ and $n$ have a common divisor $d>1$, the motion splits into $d$ disjoint subsets of bodies,
each subset following a distinct curve congruent under rotations.

\begin{proposition}[Coprimality condition for choreographies]
\label{prop:gcd}
Let $n\ge3$ and $W\ge1$ be integers, and consider the $D_n$--equivariant configuration given by~\eqref{eq:choreo}. Here $\gcd(W,n)$ denotes the greatest common divisor of $W$
 and $n$. Then:
\begin{enumerate}[label=(\roman*)]
\item If $\gcd(W,n)=1$, the configuration is a \emph{choreography}: all bodies trace the same curve $u(t)$, time--shifted by $T/n$.
\item If $\gcd(W,n)=d>1$, the configuration decomposes into $d$ \emph{disjoint sub--choreographies}, each formed by $n/d$ bodies sharing a common trajectory.
\end{enumerate}
\end{proposition}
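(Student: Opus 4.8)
The plan is to analyze the group action generated by the "shift-and-rotate" map $\sigma:(t,x)\mapsto(t+T/n,\,R_{2\pi/n}x)$ on the set of bodies, and to track simultaneously the time variable modulo $T$ and the winding of the curve around the origin. First I would observe that iterating the defining relation $q_i(t)=R_{2\pi i/n}u(t+iT/n)$ shows that body $i$ and body $i'$ trace the same unparametrized curve precisely when there is an element of the cyclic group $\langle\sigma\rangle$ carrying one trajectory onto the other; since $\sigma^n$ acts on $u$ by $u(t)\mapsto R_{2\pi}u(t+T)=u(t)$ (using $T$-periodicity and $R_{2\pi}=\mathrm{id}$), the relevant group is $\mathbb Z/n\mathbb Z$ acting on the index set $\{0,\dots,n-1\}$ by $i\mapsto i+1 \bmod n$. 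This action is transitive, so all $n$ bodies lie on a single curve as \emph{sets}; the subtlety the winding number resolves is whether the closed curve is traversed once (genuine choreography with shift $T/n$) or is itself a multiply-covered shorter loop.

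Next I would bring in the winding number. Writing $u(t)=r(t)e^{i\theta(t)}$ with $\theta(T)-\theta(0)=2\pi W$, the $D_n$ relation $u(t+T/n)=R_{2\pi/n}u(t)$ forces $\theta(t+T/n)=\theta(t)+2\pi/n + 2\pi m_t$ for an integer that, by continuity and the total increment $2\pi W$ over the full period, must be constant and equal to... the key computation: summing the angular increments over the $n$ sub-intervals of length $T/n$ gives $2\pi W = n\cdot(2\pi/n) + 2\pi(\text{extra full turns accumulated})$, so the curve closes up after winding $W$ times while the rotational symmetry only accounts for one of those turns per $n$ steps in a way governed by $\gcd(W,n)$. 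Concretely, I would show that the smallest positive integer $p$ with $R_{2\pi p/n}u(t+pT/n)$ retracing $u$ \emph{as a parametrized loop from the same basepoint} is $p=n/\gcd(W,n)$: this is because returning to the start requires both $p/n\in\mathbb Z$ after accounting for the $W$-fold winding, i.e. the orbit of $0$ under $i\mapsto i+W\pmod n$ (the winding-weighted shift) has size $n/\gcd(W,n)$.

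For part (i), when $\gcd(W,n)=1$ the winding-weighted shift $i\mapsto i+W$ is a single $n$-cycle on $\mathbb Z/n\mathbb Z$, so the curve is traversed exactly once by the full configuration and the time shift between consecutive bodies along the shared curve is exactly $T/n$; I would spell out that this is the definition of a choreography. For part (ii), when $d=\gcd(W,n)>1$, the permutation $i\mapsto i+W\pmod n$ decomposes into $d$ disjoint cycles each of length $n/d$, namely the cosets of the subgroup $d\mathbb Z/n\mathbb Z$; the bodies whose indices lie in a fixed coset trace one common closed curve (a loop winding $W/d$ times, reparametrized), and distinct cosets give curves related by the rotations $R_{2\pi j/d}$, $j=0,\dots,d-1$, hence the claimed decomposition into $d$ disjoint sub-choreographies of $n/d$ bodies each.

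The main obstacle I expect is not the combinatorics of the cyclic action but the careful bookkeeping of \emph{parametrization versus image}: one must rule out that in the coprime case the image curve is secretly a multiply-covered smaller loop (which would make it a sub-choreography in disguise), and conversely confirm that in the non-coprime case the $d$ sub-curves are genuinely disjoint rather than coinciding. Both points are controlled by the winding number: the increment of $\theta$ over $[0,T/n]$ is $2\pi/n$ plus a fixed integer multiple of $2\pi$, and the integer is pinned down by $W$; disjointness of the sub-curves then follows from the fact that the rotations $R_{2\pi j/d}$ for $j=1,\dots,d-1$ are nontrivial and the generating curve meets each ray from the origin in a controlled way (here one invokes the uniform separation / non-collision structure established via the polar description, so that the $d$ rotated copies cannot overlap). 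Making this last disjointness argument rigorous — as opposed to the purely formal orbit-counting — is the step that needs the most care.
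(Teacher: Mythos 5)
Your plan is essentially the paper's own argument: the shift-and-rotate map generates a cyclic $\mathbb{Z}_n$ action on the bodies, and both proofs conclude by counting its orbits, which number $\gcd(W,n)$ — one transitive orbit (a single shared curve with shift $T/n$) when $\gcd(W,n)=1$, and $d$ orbits of $n/d$ bodies each when $\gcd(W,n)=d>1$. If anything, you are more explicit than the paper, whose proof simply asserts the orbit count without spelling out how $W$ enters the action or addressing the parametrization-versus-image and disjointness subtleties you flag.
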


\begin{proof}
Iterating the symmetry $u(t+\tfrac{T}{n})=R_{2\pi/n}u(t)$ yields
\[
u\!\left(t+\tfrac{jT}{n}\right)=R_{2\pi j/n}u(t),\qquad j\in\mathbb{Z}.
\]
The time shift $t\mapsto t+\tfrac{T}{n}$ combined with $R_{2\pi/n}$ generates an action of the cyclic group $\mathbb{Z}_n$ on the set of bodies $\{q_0,\dots,q_{n-1}\}$. 
The number of orbits of this action equals $\gcd(W,n)$.  
If $\gcd(W,n)=1$, the generator acts transitively, hence all bodies belong to a single orbit, i.e., a single generating curve.  
If $\gcd(W,n)=d>1$, the action splits into $d$ disjoint orbits of size $n/d$, each corresponding to a distinct curve traced by $n/d$ bodies. 
\end{proof}

The condition $\gcd(W,n)=1$ ensures that there are no exact phase repetitions between different bodies,
avoiding collisions due to temporal overlaps.
When $\gcd(W,n)>1$, spatial coincidences may occur between bodies in different sub--choreographies,
although each subconfiguration remains collision--free internally.

\medskip
\medskip
In what follows we restrict attention to the \emph{coprime} case $\gcd(W,n)=1$, 
so that the configuration~\eqref{eq:choreo} 
indeed represents a choreography in which all bodies share a single generating curve $u(t)$, 
uniformly shifted in time. 
Under this hypothesis, all subsequent expressions and results are to be understood 
in this transitively symmetric regime.

\begin{remark}[Spectral structure of the $D_n$–symmetry]
\label{rem:spectral-XDn}
The condition 
\[
u\!\left(t+\tfrac{T}{n}\right)=R_{2\pi/n}u(t)
\]
imposes a specific constraint on the Fourier spectrum of $u$.
Writing $u(t)$ in complex form as
\[
u(t)=\sum_{k\in\mathbb{Z}} a_k\,e^{i k\omega t},
\qquad \omega=\tfrac{2\pi}{T},
\]
we obtain
\[
e^{i k\omega T/n}\,a_k = e^{i 2\pi/n}\,a_k,
\]
so that only the modes satisfying $k\equiv 1\pmod n$ may have nonzero coefficients.
Consequently, admissible functions in $X_{D_n}$ contain only
Fourier modes of the form $k=1+\ell n$ with $\ell\in\mathbb{Z}$.
The dominant mode $k=1+\ell n$ determines the winding number 
$W=1+\ell n$, which is automatically coprime with $n$.
This characterization links the dihedral symmetry of the configuration space
to its harmonic structure and explains the multi–winding
patterns observed in the generating curves illustrated later.
\end{remark}

\medskip
Representative examples of generating curves $u(t)$ satisfying
the dihedral symmetry constraint 
$u(t+\tfrac{T}{n})=R_{2\pi/n}u(t)$
and having winding number $W>1$ are shown in
Figure~\ref{fig:generating-curves}.
Each curve corresponds to a dominant Fourier mode
$k=1+\ell n$, consistent with the spectral characterization
in Remark~\ref{rem:spectral-XDn}.
They illustrate the multi–winding, $D_n$–equivariant
patterns that define the configuration space $X_{D_n}$.

\begin{figure}[t]
  \centering
  \includegraphics[width=0.9\textwidth]{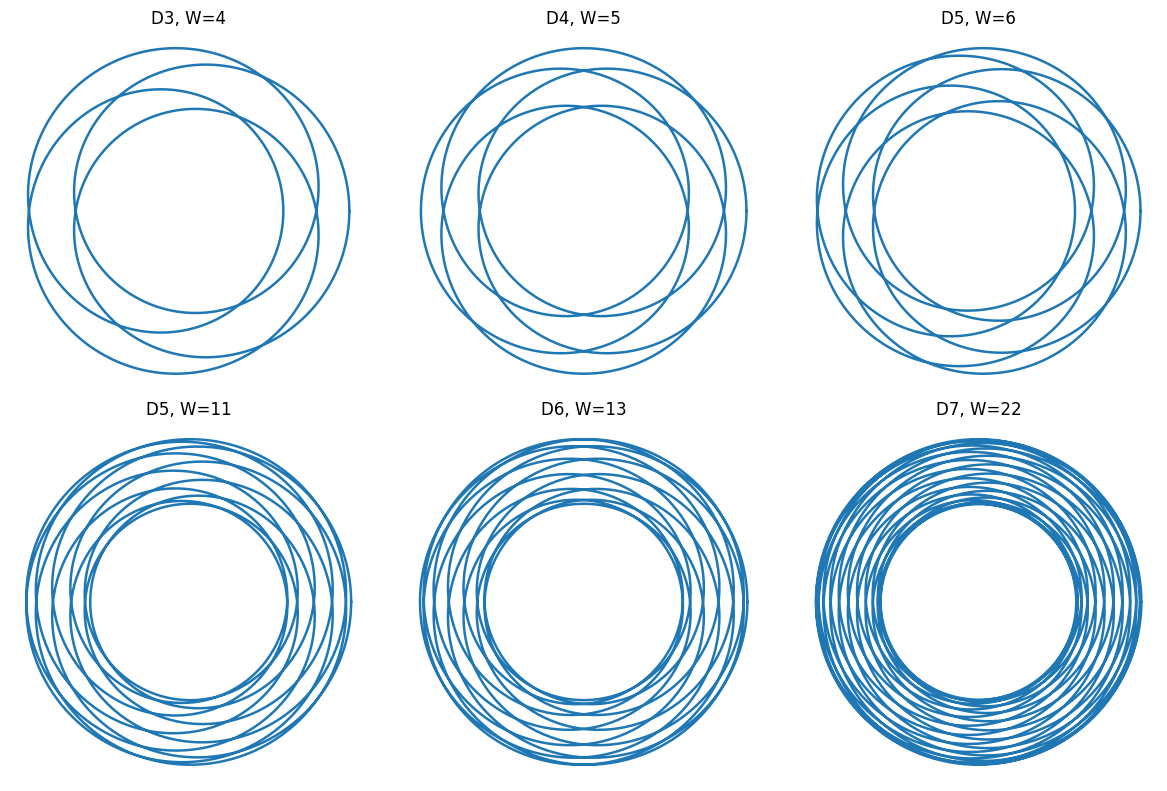}
  \caption{
  Generating curves $u(t)$ with dihedral symmetry 
  $u(t+\tfrac{T}{n})=R_{2\pi/n}u(t)$ 
  and winding number $W>1$.
  Each curve is composed of admissible Fourier modes
  $k=1+\ell n$, ensuring that $\gcd(W,n)=1$.
  These examples correspond to $(n,W)=(3,4)$, $(4,5)$, $(5,6)$,
  $(5,11)$, $(6,13)$, and $(7,22)$, respectively.}
  \label{fig:generating-curves}
\end{figure}

\subsection{Equations of motion in the rotating frame}

To study periodic solutions with $D_n$ symmetry, it is convenient to switch to a \emph{rotating frame} with constant angular velocity $\Omega\in\mathbb{R}$.
Writing $q_i(t)=R_{\Omega t}\,x_i(t)$, Newton’s equations~\eqref{eq:newton} become
\begin{equation}\label{eq:rotating}
\ddot x_i + 2\Omega J \dot x_i - \Omega^2 x_i
= \sum_{\substack{j=0 \\ j\neq i}}^{n-1}
\frac{m\,(x_j - x_i)}{\|x_j - x_i\|^{\alpha+2}},
\qquad i=0,\ldots,n-1.
\end{equation}
Here, the gyroscopic term $2\Omega J\dot x_i$ and the centrifugal term $-\Omega^2 x_i$
represent the fictitious forces in the rotating frame.

\medskip

Under the choreographic condition~\eqref{eq:choreo}, the system~\eqref{eq:rotating}
reduces to a single equation for the generating curve \(u(t)\):
\begin{equation}\label{eq:equation-u-physical}
\ddot u + 2\Omega J\dot u - \Omega^2 u
= \sum_{k=1}^{n-1}
\frac{m\,(R_{2\pi k/n}u(t+\tfrac{kT}{n}) - u(t))}{
\|R_{2\pi k/n}u(t+\tfrac{kT}{n}) - u(t)\|^{\alpha+2}}.
\end{equation}

To express the problem in the Fredholm form required by Mawhin’s theorem,
we multiply both sides of~\eqref{eq:equation-u-physical}, by $-1$, thus obtaining
\begin{equation}\label{eq:equation-u}
-\,\ddot u - 2\Omega J\dot u + \Omega^2 u
= - \sum_{k=1}^{n-1}
\frac{m\,(R_{2\pi k/n}u(t+\tfrac{kT}{n}) - u(t))}{
\|R_{2\pi k/n}u(t+\tfrac{kT}{n}) - u(t)\|^{\alpha+2}}.
\end{equation}

\medskip

We define the linear operator
\begin{equation}\label{eq:L-def}
L u = -\ddot u - 2\Omega J\dot u + \Omega^2 u,
\qquad
L:H^2_{\mathrm{per}}([0,T];\mathbb{R}^2)\longrightarrow L^2_{\mathrm{per}}([0,T];\mathbb{R}^2),
\end{equation}
and the nonlinear operator $N:X_{D_n}\to L^2_{\mathrm{per}}$ given by the right--hand side of~\eqref{eq:equation-u}, which couples the $n-1$ rotated and time--shifted copies of \(u\).

Thus, the $D_n$--choreography problem in the rotating frame takes the compact operator form
\begin{equation}\label{eq:LuNu}
L u = N(u), \qquad u\in X_{D_n}\subset H^2_{\mathrm{per}}([0,T];\mathbb{R}^2),
\end{equation}
which provides the functional framework for the application of Mawhin’s coincidence degree.

\subsection{Uniform separation and collision exclusion}

A key point for applying Mawhin’s theorem is to ensure that the choreographic trajectories under consideration remain collision--free.
Geometrically, the $D_n$ symmetry imposes a minimal separation between bodies, independent of the winding number $W\ge1$. 
This is formalized as follows.

\begin{proposition}[Uniform separation under $D_n$ symmetry for all $W\ge1$]
\label{prop:uniform-sep-W}
Let $n\ge3$ and consider $\gamma:\mathbb{R}\to\mathbb{R}^2$ of the form
\[
\gamma(s)=\rho(s)\,(\cos s,\sin s),
\qquad
\rho\in C^1(\mathbb{R}),\quad \rho(s+2\pi/n)=\rho(s),\quad \rho(-s)=\rho(s),\quad \rho(s)\ge r_*>0.
\]
For a winding number $W\ge1$, define the choreographic trajectories
\[
q_k(t)=R_{2\pi k/n}\,\gamma\!\Big(Wt+\tfrac{2\pi k}{n}\Big),\qquad k=0,\dots,n-1.
\]
Then, for all $t\in\mathbb{R}$ and all $k=1,\dots,n-1$,
\begin{equation}\label{eq:rho0-sep}
\|q_k(t)-q_0(t)\|\ \ge\ 2\,r_*\,\sin\!\Big(\tfrac{\pi}{n}\Big)\ =:\ \rho_0>0.
\end{equation}
In particular, the configuration is collision--free and uniformly separated,
with a bound that does not depend on $W$.
\end{proposition}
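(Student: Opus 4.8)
The plan is to prove \eqref{eq:rho0-sep} by a direct computation in polar form, reducing it to the elementary chord--length bound $\min_{1\le k\le n-1}\sin(\tfrac{\pi k}{n})=\sin(\tfrac{\pi}{n})$ on a circle. Write $e(s)=(\cos s,\sin s)$, so that $\gamma(s)=\rho(s)\,e(s)$ and $R_\varphi e(s)=e(s+\varphi)$ for every $\varphi$; the homogeneity of the picture in the radial variable will let the winding number $W$ drop out entirely.

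First I would simplify the trajectories. Since $\tfrac{2\pi k}{n}$ is an integer multiple of the period $\tfrac{2\pi}{n}$ of $\rho$, one has $\rho\bigl(Wt+\tfrac{2\pi k}{n}\bigr)=\rho(Wt)$, hence
\[
q_k(t)=R_{2\pi k/n}\,\gamma\bigl(Wt+\tfrac{2\pi k}{n}\bigr)=\rho(Wt)\,R_{2\pi k/n}\,e\bigl(Wt+\tfrac{2\pi k}{n}\bigr)=\rho(Wt)\,e(Wt+\varphi_k),
\]
where $\varphi_k$ is the net phase obtained from composing the frame rotation $R_{2\pi k/n}$ with the time shift inside $\gamma$: it is a multiple of $\tfrac{2\pi}{n}$ determined by $k$ and $n$, and — in the coprime regime of Proposition~\ref{prop:gcd} (see also Remark~\ref{rem:spectral-XDn}) — it is not a multiple of $2\pi$ for $k=1,\dots,n-1$. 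Likewise $q_0(t)=\rho(Wt)\,e(Wt)$. Thus $q_k(t)$ and $q_0(t)$ have the same modulus $\rho(Wt)\ge r_*>0$ and differ by a rotation through $\varphi_k$. Applying the identity $\|e(a+\varphi)-e(a)\|=2\,\bigl|\sin\tfrac{\varphi}{2}\bigr|$ gives
\[
\|q_k(t)-q_0(t)\|=\rho(Wt)\,\bigl\|e(Wt+\varphi_k)-e(Wt)\bigr\|=2\,\rho(Wt)\,\Bigl|\sin\tfrac{\varphi_k}{2}\Bigr|\ \ge\ 2\,r_*\,\Bigl|\sin\tfrac{\varphi_k}{2}\Bigr|.
\]
Finally, since $\tfrac{\varphi_k}{2}\in\tfrac{\pi}{n}\mathbb{Z}\setminus\pi\mathbb{Z}$, its distance to $\pi\mathbb{Z}$ is at least $\tfrac{\pi}{n}$; combining $|\sin x|=\sin\bigl(\dist(x,\pi\mathbb{Z})\bigr)$ with the monotonicity of $\sin$ on $[0,\pi/2]$ yields $\bigl|\sin\tfrac{\varphi_k}{2}\bigr|\ge\sin\tfrac{\pi}{n}$, the value $\sin\tfrac{\pi}{n}$ being attained at the extreme phases via $\sin(\pi-x)=\sin x$. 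This gives \eqref{eq:rho0-sep} with $\rho_0=2r_*\sin(\pi/n)$, uniformly in $t$ and independent of $W$, and collision freeness is immediate.

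The step I expect to be the main obstacle is the first one: keeping precise track of how the external rotation $R_{2\pi k/n}$, the shift $\tfrac{2\pi k}{n}$ in the argument of $\gamma$, and the $(2\pi/n)$--periodicity of $\rho$ collapse into the single phase $\varphi_k$, and — the genuinely delicate point — verifying that $\varphi_k$ never reduces to a multiple of $2\pi$ as $k$ ranges over $1,\dots,n-1$, since $\varphi_k\equiv 0$ would force the permanent coincidence $q_k\equiv q_0$. This nondegeneracy is precisely what the dihedral structure together with $\gcd(W,n)=1$ is meant to secure; once it is in place, the chord estimate and the fact that $\min_{1\le k\le n-1}\sin(\tfrac{\pi k}{n})=\sin(\tfrac{\pi}{n})$ are routine.
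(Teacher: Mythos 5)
Your overall route is the same as the paper's: use the $2\pi/n$--periodicity of $\rho$ to equalize the radii, reduce $q_k(t)-q_0(t)$ to a rotation of a single vector of length $\rho(Wt)\ge r_*$, and invoke the chord identity $\|(I-R_\phi)x\|=2\|x\|\,|\sin(\phi/2)|$ together with $\min_{1\le k\le n-1}\sin(\pi k/n)=\sin(\pi/n)$. But your proposal has a genuine gap exactly where the content of the proof lies: you never compute the net phase $\varphi_k$, and you discharge the nondegeneracy $\varphi_k\notin 2\pi\mathbb{Z}$ by appealing to ``the dihedral structure together with $\gcd(W,n)=1$''. That appeal cannot work: $W$ enters both $q_k(t)$ and $q_0(t)$ only through the common argument $Wt$, so it cancels from $\varphi_k$ entirely --- $\varphi_k$ depends only on $k$ and $n$ (indeed the proposition itself asserts the bound is independent of $W$, and Proposition~\ref{prop:gcd} concerns whether all bodies share one curve, not their pointwise separation). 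So the ``delicate point'' you defer is not secured by anything you have written.

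The paper settles precisely this point by an explicit computation: using $\rho(Wt+\tfrac{2\pi k}{n})=\rho(Wt)$ and $R_\phi(\cos s,\sin s)=(\cos(s+\phi),\sin(s+\phi))$, it writes $q_k(t)-q_0(t)=(R_{2\pi k/n}-I)\,\gamma(Wt)$, i.e.\ $\varphi_k=\tfrac{2\pi k}{n}\in(0,2\pi)$ for $1\le k\le n-1$, so the nondegeneracy is automatic and no coprimality is invoked. You should carry out the composition yourself rather than wave it through: tracking the external rotation $R_{2\pi k/n}$ and the internal shift $\tfrac{2\pi k}{n}$ exactly as written in the statement, the two phase contributions add, giving $\varphi_k=\tfrac{4\pi k}{n}$, which for even $n$ and $k=n/2$ is a multiple of $2\pi$ --- in which case the nondegeneracy you postponed actually fails, and no arithmetic condition on $W$ can rescue it. Reconciling your $\varphi_k$ with the paper's identity (equivalently, pinning down the intended normalization of the time shift inside $\gamma$) is therefore the one step your proof cannot omit; as submitted, the argument is incomplete at exactly that step, and the proposed fix (coprimality of $W$ and $n$) is the wrong tool for it.
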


\begin{proof}
By definition,
\[
q_k(t)-q_0(t)=R_{2\pi k/n}\,\gamma\!\Big(Wt+\tfrac{2\pi k}{n}\Big)-\gamma(Wt).
\]
Using the $2\pi/n$--periodicity of $\rho$ and the identity 
$R_{2\pi k/n}(\cos s,\sin s)=(\cos(s+\tfrac{2\pi k}{n}),\sin(s+\tfrac{2\pi k}{n}))$,
we obtain
\[
q_k(t)-q_0(t)=(R_{2\pi k/n}-I)\,\gamma(Wt).
\]
For any rotation $R_\phi$ and vector $x\in\mathbb{R}^2$ one has
$\|(I-R_\phi)x\|=2\|x\|\sin(\phi/2)$, hence
\[
\|q_k(t)-q_0(t)\|=2\,\|\gamma(Wt)\|\sin\!\Big(\tfrac{\pi k}{n}\Big).
\]
Since $\|\gamma(Wt)\|=\rho(Wt)\ge r_*$ and
$\min_{1\le k\le n-1}\sin(\pi k/n)=\sin(\pi/n)$, it follows that
\[
\|q_k(t)-q_0(t)\|\ \ge\ 2\,r_*\,\sin\!\Big(\tfrac{\pi}{n}\Big)=:\rho_0>0.
\]
\end{proof}

The estimate~\eqref{eq:rho0-sep} depends only on $n$ and the minimal radius $r_*$,
but not on the winding number $W$. 
Thus, even for choreographies performing multiple turns around the origin,
the minimal pairwise separation remains uniform.

\begin{corollary}[Collision exclusion]
Under the hypotheses of Proposition~\ref{prop:uniform-sep-W},
all trajectories $q_k$ are collision--free for all times.
Consequently, the nonlinear operator $N(u)$ is well defined on $X_{D_n}$,
since the denominators 
$\|u(t)-R_{2\pi k/n}\,u(t+\tfrac{kT}{n})\|^{\alpha+2}$
are uniformly bounded away from zero.
\end{corollary}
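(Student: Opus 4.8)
The corollary is an immediate consequence of Proposition~\ref{prop:uniform-sep-W}, so the plan is to verify that the hypotheses of that proposition apply to a generic $u\in X_{D_n}$ and then to translate the separation bound~\eqref{eq:rho0-sep} into a lower bound on the denominators appearing in $N(u)$. First I would recall that any $u\in X_{D_n}\subset H^2_{\mathrm{per}}\hookrightarrow C^1_{\mathrm{per}}$ is continuous, hence $\|u(t)\|$ attains a minimum $r_*$ on the compact interval $[0,T]$; the point to address is that this minimum is strictly positive. For a genuine choreographic candidate $r_*>0$ because the curve winds $W\ge1$ times around the origin and therefore cannot pass through it — a degree-theoretic or elementary polar-coordinate argument shows $\theta(t)$ would be undefined at a zero of $u$, contradicting $W\in\mathbb{Z}$; alternatively one restricts attention to the open set $\{u:\min_t\|u(t)\|>0\}$ on which $N$ is being defined, which is the relevant domain for Mawhin's argument anyway.

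Next I would put $u$ in the polar form required by Proposition~\ref{prop:uniform-sep-W}. Writing $u(t)=r(t)e^{i\theta(t)}$ with $\theta$ essentially linear (the dominant Fourier mode $k=W$ giving $\theta(t)\approx W\omega t$), reparametrize by $s=Wt$ so that $\gamma(s):=\rho(s)(\cos s,\sin s)$ with $\rho(s)=r(s/W)$; the $D_n$-constraint $u(t+T/n)=R_{2\pi/n}u(t)$ together with the time-reflection $u(-t)=Su(t)$ translate precisely into $\rho(s+2\pi/n)=\rho(s)$ and $\rho(-s)=\rho(s)$, and $\rho(s)\ge r_*>0$ by the previous paragraph. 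Here I should be careful: the identification of $u(t)$ with $\gamma(Wt+\text{phase})$ of the proposition's exact form requires that the radial function depend on $t$ only through the combination $Wt$, which is guaranteed by the spectral characterization in Remark~\ref{rem:spectral-XDn} (all active modes are $k=1+\ell n$, so after reducing mod the winding the radius is $2\pi/n$-periodic in $s$). With these identifications the trajectories $q_k(t)=R_{2\pi k/n}u(t+kT/n)=R_{2\pi k/n}\gamma(Wt+2\pi k/n)$ are exactly those of Proposition~\ref{prop:uniform-sep-W}.

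Applying the proposition then gives, for every $t$ and every $k=1,\dots,n-1$,
\[
\|u(t)-R_{2\pi k/n}\,u(t+\tfrac{kT}{n})\|=\|q_0(t)-q_k(t)\|\ge 2r_*\sin(\pi/n)=\rho_0>0.
\]
Consequently each denominator satisfies $\|u(t)-R_{2\pi k/n}u(t+kT/n)\|^{\alpha+2}\ge\rho_0^{\alpha+2}>0$ uniformly in $t$, so the summands of $N(u)$ in~\eqref{eq:equation-u} are bounded continuous functions of $t$, hence lie in $L^2_{\mathrm{per}}$; summing the $n-1$ terms shows $N(u)\in L^2_{\mathrm{per}}$. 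In particular the choreographic configuration $\{q_k\}$ never collides, which is the first assertion. The only genuine subtlety — and the step I expect to require the most care — is the passage from an abstract $u\in X_{D_n}$ to the exact polar normal form of Proposition~\ref{prop:uniform-sep-W}: one must justify that $\|u(t)\|$ depends on $t$ only through $Wt$ (equivalently that the non-dominant modes $k=1+\ell n$ with $\ell\neq \ell_0$ do not spoil the $2\pi/n$-periodicity of the radius in the rescaled variable). If one does not wish to invoke the spectral structure, a cleaner route is to observe that $q_0(t)-q_k(t)=(R_{2\pi k/n}-I)R_{2\pi k/n}^{-1}q_k(t)$ is not quite right in general; instead, using only $u(t+kT/n)=R_{2\pi k/n}u(t)$ iterated from the defining relation of $X_{D_n}$, one gets $q_k(t)=R_{2\pi k/n}u(t+kT/n)=R_{4\pi k/n}u(t)$, whence $q_0(t)-q_k(t)=(I-R_{4\pi k/n})u(t)$ and $\|q_0(t)-q_k(t)\|=2\|u(t)\|\sin(2\pi k/n)$; since $\gcd(2W,n)\mid\gcd(2,n)\cdot\gcd(W,n)$ and $\gcd(W,n)=1$, the multiset $\{2k \bmod n\}$ still omits $0$, so $\min_k\sin(2\pi k/n)>0$ and the same lower bound $\ge 2r_*\sin(\pi/n')$ with $n'=n$ or $n/\gcd(2,n)$ follows directly — bypassing the normal-form issue entirely.
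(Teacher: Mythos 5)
The paper offers no argument for this corollary beyond reading it off Proposition~\ref{prop:uniform-sep-W}: once every pair satisfies $\|q_k(t)-q_0(t)\|\ge\rho_0=2r_*\sin(\pi/n)$, each denominator is bounded below by $\rho_0^{\alpha+2}$ and $N(u)\in L^2_{\mathrm{per}}$. Your middle portion reproduces exactly this, and your observation that a general $u\in X_{D_n}$ is not literally of the polar normal form $\rho(s)(\cos s,\sin s)$ assumed in the proposition is a fair point which the paper glosses over (as is your restriction to the subset $\{\min_t\|u(t)\|>0\}$, since $N$ is only defined on the collision-free part of $X_{D_n}$). However, your repair of the normal-form issue does not work: the spectral characterization of Remark~\ref{rem:spectral-XDn} (only modes $k\equiv1\pmod n$) does not imply that the phase advances linearly, nor that $\|u(t)\|$ depends on $t$ only through $Wt$; already a two-mode function $a_1e^{i\omega t}+a_{1+n}e^{i(1+n)\omega t}$ violates both, so the reduction to the hypotheses of Proposition~\ref{prop:uniform-sep-W} is not justified as written.

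The ``cleaner route'' you propose instead is genuinely wrong. From $q_k(t)=R_{4\pi k/n}u(t)$ you need $\min_{1\le k\le n-1}|\sin(2\pi k/n)|>0$, but for even $n$ the choice $k=n/2$ gives $2k\equiv0\pmod n$ and $\sin(2\pi k/n)=0$; the coprimality of $W$ and $n$ is irrelevant here because $W$ does not appear in that expression at all, so the asserted conclusion that $\{2k\bmod n\}$ omits $0$ fails, and no bound of the form $2r_*\sin(\pi/n')$ survives for that pair. Taken literally, your identity would say $q_{n/2}\equiv q_0$, a permanent coincidence of two bodies --- the opposite of the corollary. (The discrepancy traces back to conventions: the displayed proof of Proposition~\ref{prop:uniform-sep-W} effectively treats the bodies at a fixed time as the $n$ single rotations of one point, which is what produces $\sin(\pi k/n)$; compounding the rotation $R_{2\pi k/n}$ with the time shift, as in your computation, double-counts and yields $\sin(2\pi k/n)$.) A correct write-up should either work with curves already in the proposition's polar normal form, as the paper implicitly does, or justify the single-rotation identity $q_k(t)-q_0(t)=(R_{2\pi k/n}-I)u(t)$ before invoking $\|(I-R_\phi)x\|=2\|x\|\,|\sin(\phi/2)|$ and the bound $\min_{1\le k\le n-1}\sin(\pi k/n)=\sin(\pi/n)$.
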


\section{Functional framework and operators}
\label{sec:functional}

The goal of this section is to formulate the $D_n$--equivariant choreography problem
in a rigorous functional setting, so that the equation
\[
L u = N(u)
\]
is well posed as an identity between Hilbert (hence Banach) spaces.
We specify the functional spaces $X$ and $Z$,
define the linear and nonlinear operators $L$ and $N$,
and express $L$ in a compact form using the \emph{rotating covariant derivative}
$D_t=\partial_t+\Omega J$, which reveals its geometric and coercive structure.

\subsection{Spaces $X$ and $Z$}

Let $T>0$ be the period of the generating curve $u(t)$.
We work with Sobolev spaces of $T$--periodic functions taking values in $\mathbb{R}^2$.

We define
\[
X = H^2_{\mathrm{per}}([0,T];\mathbb{R}^2)
= \{u\in H^2([0,T];\mathbb{R}^2): u(0)=u(T),\, \dot u(0)=\dot u(T)\},
\]
endowed with the norm
\[
\|u\|_X^2 = \|u\|_{H^2}^2
= \|u\|_{L^2}^2 + \|\dot u\|_{L^2}^2 + \|\ddot u\|_{L^2}^2.
\]
We additionally impose the \emph{zero--mean} condition
\[
\int_0^T u(t)\,dt = 0,
\]
which removes the translational invariance and guarantees the injectivity of $L$ under nonresonance.

The target space is
\[
Z = L^2_{\mathrm{per}}([0,T];\mathbb{R}^2),
\qquad \|v\|_Z = \|v\|_{L^2}.
\]
The pair $(X,Z)$ forms a Hilbert couple, and the embedding
$X\hookrightarrow Z$ is compact by the Rellich--Kondrachov theorem \cite{Brezis2010}.
Thus, the embedding $X\hookrightarrow Z$ being compact implies that 
$L^{-1}:Z\to Z$ is compact.

\medskip
The $D_n$--equivariant symmetry is enforced by restricting $X$ to the closed subspace
\[
X_{D_n} =
\Bigl\{
u\in H^2_{\mathrm{per}}([0,T];\mathbb{R}^2):
u\!\bigl(t+\tfrac{T}{n}\bigr)=R_{2\pi/n}u(t),\;
u(-t)=S\,u(t)
\Bigr\}.
\]
This subspace is invariant under both $L$ and $N$ because
$SJS=-J$ and the interaction terms are rotation--equivariant.
The existence problem will be analyzed entirely within $X_{D_n}$.

\subsection{Definition of the operators $L$ and $N$}

On $X_{D_n}$consider the \emph{linear operator}
\begin{equation}\label{eq:L-def}
L u = -\ddot u - 2\Omega J \dot u + \Omega^2 u,
\qquad
L:X\to Z.
\end{equation}
The minus sign is introduced for analytical convenience:
it renders $L$ \emph{coercive} and interpretable as the negative of the
dynamical operator $\partial_t^2 + 2\Omega J\partial_t - \Omega^2$.
The principal part of $L$ is the elliptic operator $-\partial_t^2$,
so $L$ is a Fredholm operator of index zero from $H^2_{\mathrm{per}}$ to $L^2_{\mathrm{per}}$.

\medskip
The \emph{nonlinear operator} $N:X\to Z$ is given by
\begin{equation}\label{eq:N-def}
(Nu)(t)
= \sum_{k=1}^{n-1}
\frac{m\,\bigl(u(t)-R_{2\pi k/n}\,u(t+\tfrac{kT}{n})\bigr)}{
\|u(t)-R_{2\pi k/n}\,u(t+\tfrac{kT}{n})\|^{\alpha+2}},
\qquad 0<\alpha<2.
\end{equation}
Each term represents the gravitational interaction between the reference particle
and its rotated copy $R_{2\pi k/n}u(t+\tfrac{kT}{n})$.
By Proposition~\ref{prop:uniform-sep-W},
the denominators remain uniformly bounded away from zero,
so $N$ is well defined and continuous on $X_{D_n}$.
Moreover, since rotations and time shifts commute with differentiation,
$N$ preserves the $D_n$ symmetry:
if $u\in X_{D_n}$ then $N(u)\in X_{D_n}$.

\medskip
Thus, the $D_n$ choreography problem takes the abstract form
\begin{equation}\label{eq:Lu=Nu-abstract}
L u = N(u), \qquad u\in X_{D_n}\subset H^2_{\mathrm{per}}([0,T];\mathbb{R}^2),
\end{equation}
which is the functional formulation required for the application of
Mawhin’s coincidence theorem.

\subsection{Covariant derivative and compact form of $L$}

The operator $L$ can be written compactly
in terms of the \emph{rotating covariant derivative}
\begin{equation}\label{eq:Dt-def}
D_t = \partial_t + \Omega J.
\end{equation}
A direct computation gives
\[
D_t^2
= \partial_t^2 + 2\Omega J\partial_t + \Omega^2 J^2
= \partial_t^2 + 2\Omega J\partial_t - \Omega^2 I,
\]
so that
\begin{equation}\label{eq:L-Dt}
L = -D_t^2.
\end{equation}
Hence $L$ is the \emph{negative covariant Laplacian} associated with
the flat connection determined by the constant matrix $\Omega J$.
In terms of $D_t$, the abstract equation~\eqref{eq:Lu=Nu-abstract} reads
\[
-D_t^2 u = N(u),
\]
so that $D_t^2u$ represents the covariant acceleration
as measured in the rotating frame.

\subsection{Geometric and energetic interpretation}

The operator $D_t$ has a natural geometric meaning as a covariant derivative
associated with the rotating connection on the trivial bundle
$[0,T]\times\mathbb{R}^2\to[0,T]$,
whose fibers rotate with angular velocity $\Omega$.

In the inertial frame, $\dot u$ measures the absolute variation of $u(t)$,
while in the rotating frame an additional change arises from the rotation of the basis.
This correction is expressed by the term $\Omega Ju$,
so that the effective derivative in the rotating frame is
\[
D_t u = \dot u + \Omega J u.
\]
If $D_tu=0$, the vector $u(t)$ co--rotates with the frame,
maintaining its orientation relative to it.

\medskip
The associated energy identity is obtained by integration by parts:
\[
\langle L u,u\rangle_{L^2}
= -\|D_t u\|_{L^2}^2,
\]
which shows that $L$ is negative--semidefinite and that
$\|D_t u\|_{L^2}^2$ represents the kinetic energy
in the rotating frame, already incorporating the Coriolis and centrifugal effects.

\medskip
In Section~\ref{sec:operatorL} we perform the spectral analysis of $L$
using the Fourier basis, proving that under the nonresonance condition
$k\omega\neq\Omega$ with $\omega=2\pi/T$,
the operator $L:H^2_{\mathrm{per}}\to L^2_{\mathrm{per}}$ is an isomorphism.
Its inverse $K=L^{-1}$ is continuous and compact on bounded subsets of $L^2_{\mathrm{per}}$,
thus satisfying the Fredholm and compactness hypotheses required by
Mawhin’s coincidence degree.

\section{Mawhin’s framework and coincidence degree}
\label{sec:mawhinframework}

Mawhin’s method provides a unified topological framework for proving
the existence of solutions to operator equations of the form
\begin{equation}\label{eq:mawhin-general}
L u = N(u),
\end{equation}
where $L$ is a linear Fredholm operator of index $0$ between Banach (in particular, Hilbert) spaces,
and $N$ is a continuous nonlinear perturbation that is \emph{$L$--compact} on bounded sets.
This generalizes the Leray–Schauder fixed point theorem
and is particularly suited to nonlinear differential equations of second order,
such as those arising in Celestial Mechanics.

\subsection{Fredholm operators and canonical decomposition}

Let $X$ and $Z$ be Banach spaces.
A bounded linear operator $L:\mathrm{dom}(L)\subset X\to Z$
is \emph{Fredholm of index~$0$} if
\[
\dim \ker L < \infty,\qquad
\mathrm{im}\,L \text{ is closed in } Z,\qquad
\dim(Z/\mathrm{im}\,L)=\dim\ker L.
\]

It follows from classical results (see, e.g., \cite[Chap.~IV]{Kato1995}) that there exist bounded projections
\[
P:X\to X, \qquad Q:Z\to Z,
\]
such that
\begin{equation}\label{eq:fred-sumas-directas}
X = \ker L \oplus \ker P,
\qquad
Z = \im L \oplus \im Q.
\end{equation}
Hence:
\begin{itemize}
\item $P$ projects onto $\ker L$ along a closed complement $\ker P$;
\item $Q$ projects onto a closed complement $\im Q$ of $\im L$ in $Z$.
\end{itemize}

The equation $Lu=Nu$ can only be solvable if $N(u)\in\im L$, that is,
\[
Q\,N(u)=0.
\]
Projecting \eqref{eq:mawhin-general} in the decomposition \eqref{eq:fred-sumas-directas}, one obtains
\[
Lu = (I-Q)N(u) + QN(u),
\qquad
Lu\in\im L,\quad (I-Q)N(u)\in\im L,\quad QN(u)\in\im Q.
\]
Since $\im L\cap\im Q=\{0\}$, the compatibility condition $QN(u)=0$ is necessary and sufficient for solvability.

\medskip
The restriction
\[
L:\ \dom L\cap\ker P \longrightarrow \im L
\]
is a bijection.  
By the Closed Graph Theorem, its inverse is bounded, and we denote it by
\[
K_P:\ \im L \longrightarrow \dom L\cap\ker P,
\qquad L\circ K_P = I_{\im L}.
\]
Therefore, for any $u\in X$ the map
\[
K_P(I-Q)N(u)
\]
is well defined and compact when $N$ is $L$--compact.
This decomposition underlies the functional structure of Mawhin’s degree.

\paragraph{Operational decomposition of $Lu=Nu$.}
With the projections above, \eqref{eq:mawhin-general} is equivalent to the coupled system
\[
\begin{cases}
Q N(u)=0, & \text{(compatibility condition)},\\[3pt]
u - K_P(I-Q)N(u) \in \ker L. & \text{(reconstruction equation)}.
\end{cases}
\]
When $\mathrm{ind}(L)=0$, we have $\dim\ker L = \mathrm{codim}\,\im L = \dim\im Q$,
and the condition $QN(u)=0$ can be regarded as a finite--dimensional equation on $\ker L$,
while the second equation defines the correction on the complement.

\medskip

Assuming temporarily that $L$ is an isomorphism 
(from $X$ onto $Z$, a fact established later under the nonresonance condition 
$\Omega\neq\pm k\,\tfrac{2\pi}{T}$),
we may take $P=0$, $Q=0$, and $K=L^{-1}$.
The equation $Lu=Nu$ then reduces to the fixed-point form
\[
u = K N(u),
\]
and the compactness of $K\circ N$ follows from 
the compact embedding $H^2_{\mathrm{per}}\hookrightarrow L^2_{\mathrm{per}}$.

\subsection{$L$--compactness and admissible homotopies}

Let $L$ be Fredholm of index $0$ and $N:\overline{\mathcal{O}}\to Z$ a continuous map
on the closure of a bounded open set $\mathcal{O}\subset X$.
The map $N$ is said to be \emph{$L$--compact on $\overline{\mathcal{O}}$} if:
\begin{enumerate}[label=(\roman*)]
\item $Q N(\overline{\mathcal{O}})$ is bounded in $Z$;
\item $(K_P\circ (I-Q)N)(\overline{\mathcal{O}})$ is relatively compact in $X$,
where $K_P=(L|_{\ker P})^{-1}$.
\end{enumerate}
Intuitively, $N$ is compact in the direction of $\im L$
and uniformly bounded in the direction of $\ker L$.

A homotopy $H:[0,1]\times\overline{\mathcal{O}}\to Z$ is called \emph{$L$--compact}
if each map $H(\lambda,\cdot)$ is $L$--compact
and the solutions of $Lu=H(\lambda,u)$ remain in a bounded subset of $X$
for all $\lambda\in[0,1]$.
These are the admissible homotopies for the coincidence degree.

\subsection{Definition of the coincidence degree}

The \emph{Leray–Schauder degree} extends the Brouwer degree
to compact operators on Banach spaces.
Given a bounded open set $\mathcal{O}\subset X$ and a compact operator
$K:\overline{\mathcal{O}}\to X$,
the degree of $I-K$ with respect to $0$ is denoted by
$\deg_{LS}(I-K,\mathcal{O},0)$
and satisfies the usual properties of existence, additivity, and homotopy invariance
(cf.~\cite{Deimling1985,Zeidler1986,Mawhin1979}).

\vspace{0.5em}
Under Mawhin’s assumptions \cite{Mawhin1972},
the \emph{coincidence degree}
\(\deg_L(N,\mathcal{O},0)\)
associated with the problem $Lu=N(u)$ is defined and enjoys the following properties:

\begin{itemize}[leftmargin=2em]
\item $\deg_L(N,\mathcal{O},0)$ is well defined whenever
$Lu\ne N(u)$ for all $u\in\partial\mathcal{O}$.
\item If $\deg_L(N,\mathcal{O},0)\ne0$,
then the equation $Lu=N(u)$ has at least one solution in $\mathcal{O}$.
\item The degree is invariant under $L$--compact homotopies
that do not introduce solutions on $\partial\mathcal{O}$.
\end{itemize}

When $L$ is an isomorphism between $X$ and $Z$,
the coincidence degree reduces to the Leray–Schauder degree:
\[
\deg_L(N,\mathcal{O},0)
= \deg_{LS}(I-L^{-1}N,\mathcal{O},0).
\]
In this case, the existence of a solution follows directly
from the Leray–Schauder fixed point theorem
(\cite{Deimling1985,Zeidler1986,Mawhin1979}).

\subsection{Mawhin’s coincidence theorem}

\begin{theorem}[Mawhin {\cite{Mawhin1972,Mawhin1977,Mawhin1979}}]
\label{thm:Mawhin}
Let $X$ and $Z$ be Banach spaces,
$L:\mathrm{dom}(L)\subset X\to Z$ a linear Fredholm operator of index $0$,
and $N:\overline{\mathcal{O}}\to Z$ a continuous map
which is $L$--compact on $\overline{\mathcal{O}}$,
where $\mathcal{O}\subset X$ is a bounded open set.
Assume that
\[
Lu \ne \lambda N(u), \qquad \forall u\in\partial\mathcal{O},\ \lambda\in[0,1],
\]
and that $\deg_L(N,\mathcal{O},0)\ne0$.
Then there exists at least one $u\in\mathcal{O}$ such that $Lu=N(u)$.
\end{theorem}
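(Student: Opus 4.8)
The plan is to reduce the coincidence equation $Lu=N(u)$ to a fixed‑point problem for a compact operator on $\overline{\mathcal{O}}$ and then invoke the Leray–Schauder degree. Using the Fredholm structure of $L$, I would fix continuous projections $P:X\to X$ onto $\ker L$ and $Q:Z\to Z$ onto a closed complement $\im Q$ of $\im L$, so that $X=\ker L\oplus\ker P$ and $Z=\im L\oplus\im Q$ as in~\eqref{eq:fred-sumas-directas}. Since $\mathrm{ind}\,L=0$, $\dim\im Q=\dim\ker L<\infty$, so I can also fix a linear isomorphism $\Lambda:\im Q\to\ker L$. The restriction $L|_{\dom L\cap\ker P}:\dom L\cap\ker P\to\im L$ is a continuous bijection, hence has a bounded inverse $K_P$ by the Closed Graph Theorem. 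With these data I form
\[
\Phi(u)=Pu+\Lambda\,QN(u)+K_P(I-Q)N(u),\qquad u\in\overline{\mathcal{O}},
\]
and, for the homotopy, $\Psi_\lambda(u)=Pu+\Lambda\,QN(u)+\lambda K_P(I-Q)N(u)$ with $\Psi_1=\Phi$.

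The crucial algebraic step is the equivalence: for $u\in\overline{\mathcal{O}}$, $Lu=N(u)$ if and only if $u=\Phi(u)$. I would prove both directions by projecting onto the two complementary summands and using $\ker L\cap\ker P=\{0\}$ and $\im L\cap\im Q=\{0\}$. Applying $Q$ to $Lu=Nu$ and using $QL=0$ gives $QNu=0$, hence $\Lambda QNu=0$; applying $I-Q$ gives $Lu=(I-Q)Nu$, and writing $u=Pu+(I-P)u$ and applying $K_P$ yields $(I-P)u=K_P(I-Q)Nu$; adding these gives $u=\Phi(u)$. Conversely, if $u=\Phi(u)$, splitting the identity along $\ker L\oplus\ker P$ forces $\Lambda QNu=0$ (hence $QNu=0$, since $\Lambda$ is injective) and $(I-P)u=K_P(I-Q)Nu$, whence $Lu=L(I-P)u=(I-Q)Nu=Nu$. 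The analogous computation shows that for $\lambda\in(0,1]$, $Lu=\lambda Nu$ is equivalent to $u=\Psi_\lambda(u)$, and that the fixed‑point set of $\Psi_0$ is contained in $\ker L=\{u:Lu=0\}$; consequently the hypothesis $Lu\ne\lambda Nu$ on $\partial\mathcal{O}$ for all $\lambda\in[0,1]$ implies $u\ne\Psi_\lambda(u)$ on $\partial\mathcal{O}$ for all $\lambda$.

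Next I would verify that $\Phi$ and each $\Psi_\lambda$ are compact on $\overline{\mathcal{O}}$: the maps $P$ and $\Lambda Q$ have finite rank, and $QN(\overline{\mathcal{O}})$ is bounded by $L$‑compactness condition (i), so $u\mapsto Pu+\Lambda QN(u)$ is continuous with relatively compact range; $K_P(I-Q)N(\overline{\mathcal{O}})$ is relatively compact by $L$‑compactness condition (ii); hence $\Phi$ is compact, and $(\lambda,u)\mapsto\Psi_\lambda(u)$ is a compact homotopy because $[0,1]$ times a relatively compact set is relatively compact. Therefore $\deg_{LS}(I-\Phi,\mathcal{O},0)$ is well defined exactly when $Lu\ne Nu$ on $\partial\mathcal{O}$ (the $\lambda=1$ case of the hypothesis), and by construction this number is $\deg_L(N,\mathcal{O},0)$; homotopy invariance along $\Psi_\lambda$ shows it does not depend on $\lambda$. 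Finally, by the existence property of the Leray–Schauder degree, $\deg_{LS}(I-\Phi,\mathcal{O},0)=\deg_L(N,\mathcal{O},0)\ne0$ forces a zero $u_0\in\mathcal{O}$ of $I-\Phi$, i.e. $u_0=\Phi(u_0)$, and the equivalence of the second paragraph gives $Lu_0=N(u_0)$, proving the theorem.

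I expect the main obstacle to be the bookkeeping in the equivalence $Lu=Nu\iff u=\Phi(u)$ — keeping track of which summand each term lies in when $L$ is genuinely non‑injective — together with the more technical point that $\deg_L(N,\mathcal{O},0)$ must be shown independent of the auxiliary data $P,Q,\Lambda$ for the statement to be intrinsically meaningful; this requires an extra compact homotopy connecting two admissible choices without boundary zeros, which rests on the convexity of the set of bounded projections onto a fixed finite‑dimensional subspace. The compactness verifications and the reduction to Leray–Schauder are then routine.
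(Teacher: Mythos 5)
This theorem is not proved in the paper at all: it is stated as a classical result and attributed to Mawhin \cite{Mawhin1972,Mawhin1977,Mawhin1979}, so there is no internal proof to compare against, and your blind attempt is in effect a reconstruction of the standard Gaines--Mawhin argument. As such it is essentially correct. The reduction via the projections of \eqref{eq:fred-sumas-directas}, the isomorphism $\Lambda:\im Q\to\ker L$ (available because $\mathrm{ind}\,L=0$), and the map $\Phi(u)=Pu+\Lambda QN(u)+K_P(I-Q)N(u)$ is exactly the classical device; your verification that $Lu=N(u)$ iff $u=\Phi(u)$ (using $QL=0$, $K_PL=I-P$, $\ker P\cap\ker L=\{0\}$, $\im L\cap\im Q=\{0\}$) is correct, as is the compactness of $\Phi$ from conditions (i)--(ii) of $L$--compactness together with the finite rank of $P$ and $\Lambda Q$, and the final appeal to the existence property of the Leray--Schauder degree. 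Two remarks. First, the identification $\deg_L(N,\mathcal{O},0)=\deg_{LS}(I-\Phi,\mathcal{O},0)$ is precisely Mawhin's \emph{definition} of the coincidence degree (up to the sign determined by the orientation of $\Lambda$), so for the existence conclusion with the nonvanishing of the degree taken as a hypothesis you do not actually need the independence of the auxiliary data $P,Q,\Lambda$ that you flag, nor the homotopy $\Psi_\lambda$: the $\lambda=1$ case of the boundary condition makes $\deg_{LS}(I-\Phi,\mathcal{O},0)$ well defined, and its nonvanishing already yields a fixed point; the independence issue matters only to make the degree intrinsic, and the homotopy in $\lambda$ is what one uses to \emph{compute} the degree via the Brouwer degree of $QN$ on $\ker L$, which this formulation of the theorem does not require. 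Second, two small technical points worth stating explicitly: the Closed Graph argument for the boundedness of $K_P$ uses that $L$ is a closed (or bounded) operator with closed range, and fixed points of $\Phi$ lie automatically in $\dom L$ because all three summands $Pu$, $\Lambda QN(u)$, $K_P(I-Q)N(u)$ do, which is what legitimizes applying $L$ in the converse direction of your equivalence. Neither point is a gap in substance.
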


This theorem extends the classical Leray–Schauder principle
and is particularly effective for periodic boundary value problems,
since the Fredholm and $L$--compactness hypotheses
are automatically satisfied by differential operators of second order
with periodic or zero--mean constraints.

\subsection{Nonresonant case for the choreography problem}

In our setting,
the linear operator $L:H^2_{\mathrm{per}}([0,T];\mathbb{R}^2)\to L^2_{\mathrm{per}}([0,T];\mathbb{R}^2)$
defined in~\eqref{eq:L-def} is Fredholm of index~$0$,
and under the nonresonance condition $k\omega\neq\Omega$ 
(with $\omega=2\pi/T$)
it is an isomorphism.
Hence, the coincidence degree coincides with the Leray–Schauder degree,
and the $D_n$--choreography equation~\eqref{eq:Lu=Nu-abstract} takes the form
\[
u = K N(u), \qquad K=L^{-1}.
\]
The operator $K:L^2_{\mathrm{per}}\to H^2_{\mathrm{per}}$ is bounded,
and the embedding $H^2_{\mathrm{per}}\hookrightarrow L^2_{\mathrm{per}}$ is compact,
so $K\circ N$ is completely continuous.
Therefore, by the Leray–Schauder fixed point theorem,
any bounded open set $\mathcal{O}\subset X_{D_n}$ satisfying
\[
u \ne \lambda K N(u), \qquad \forall u\in\partial\mathcal{O},\ \lambda\in[0,1],
\]
contains at least one fixed point of $K N$,
equivalently one solution $u\in\mathcal{O}$ of $Lu=N(u)$.

Geometrically, the coincidence equation~\eqref{eq:mawhin-general}
represents an intersection between the graph of $N$
and the subspace $\im L$.
Mawhin’s degree counts these intersections algebraically,
and its homotopy invariance guarantees persistence of solutions
along the family
\[
L u = \lambda N(u), \qquad \lambda\in[0,1],
\]
as long as no solutions appear on the boundary of the chosen domain.
This principle underlies the application in the next section,
where we verify the $L$--compactness of $N$ and establish a~priori bounds
for the homotopy family above.

\section{Properties of the linear operator $L$}
\label{sec:operatorL}

In this section we describe the  analytical and spectral properties of the operator $L u$:
its action on the Fourier basis, diagonalization via the spectral
projectors of $J$, characterization of its spectrum, and a
Gårding-type coercive estimate ensuring the invertibility of $L$
under nonresonance conditions.

\subsection{Fourier decomposition and action of $L$}

Since $L$ has constant coefficients in $t$, its action is diagonal in the Fourier basis. Let $\omega=2\pi/T$.
Every $T$--periodic function $u:[0,T]\to\R^2$ with zero mean admits the Fourier expansion
\begin{equation}\label{eq:fourier-u}
u(t) = \sum_{k\in\ZZ} e^{i\omega k t}\,\hat u_k,
\qquad
\hat u_k = \frac{1}{T}\int_0^T u(t)\,e^{-i\omega k t}\,dt \in \CC^2,
\qquad
\hat u_{-k} = \overline{\hat u_k}.
\end{equation}

Recalling that
\[
\frac{d}{dt}e^{i\omega k t} = i\omega k\,e^{i\omega k t},
\qquad
\frac{d^2}{dt^2}e^{i\omega k t} = -(\omega k)^2 e^{i\omega k t},
\]
we compute the action of the operator \(L = -\ddot u - 2\Omega J \dot u + \Omega^2 u\)
on each Fourier mode:
\begin{align}
L u(t)
&= \sum_{k\in\ZZ} e^{i\omega k t}
   \left[(\omega k)^2 I - 2i\omega k \Omega J + \Omega^2 I\right]\hat u_k
   \label{eq:Lu-fourier}
\end{align}

We define the complex $2\times2$ matrix
\begin{equation}\label{eq:Ak}
A_k = (\omega k)^2 I - 2i\omega k\Omega J + \Omega^2 I.
\end{equation}
Then
\begin{equation}\label{eq:L-diagonal}
L u(t) = \sum_{k\in\ZZ} e^{i\omega k t} A_k \hat u_k.
\end{equation}
Thus \(L\) acts diagonally on the Fourier basis:
each frequency mode $k$ is multiplied by $A_k$.
The spectral analysis of \(L\) therefore reduces to the eigenvalues of the matrices $A_k$.

\medskip
The dihedral symmetry $u(-t)=S\,u(t)$ translates into
\[
\hat u_{-k} = S\,\overline{\hat u_k}, \qquad k\in\ZZ,
\]
which couples opposite modes but preserves the diagonal structure of \(L\).

\subsection{Spectral projectors of $J$}

We extend to $\CC^2$ to diagonalize the rotation matrix
\[
J = \begin{pmatrix} 0 & -1 \\ 1 & 0 \end{pmatrix},
\qquad
J^2 = -I, \qquad J^* = -J,
\]
which has eigenvalues $\pm i$ and orthonormal eigenvectors
\[
v_+ = \frac{1}{\sqrt{2}}\binom{1}{i},
\qquad
v_- = \frac{1}{\sqrt{2}}\binom{1}{-i},
\qquad
J v_\pm = \pm i\,v_\pm.
\]

The corresponding spectral projectors are
\[
P_+ = \frac{1}{2}(I - iJ), \qquad
P_- = \frac{1}{2}(I + iJ),
\]
satisfying
\begin{equation}\label{eq:Ppm-def}
P_\pm^2 = P_\pm, \quad P_+P_- = 0, \quad P_+ + P_- = I,
\qquad \mathrm{im}\,P_\pm = \ker(J \mp iI).
\end{equation}

\begin{remark}
The reflection matrix $S = \mathrm{diag}(1,-1)$, used in the dihedral symmetry $u(-t)=S\,u(t)$,
satisfies $SJS=-J$. Hence $S$ interchanges the two spectral subspaces of $J$, that is,
$S P_\pm S = P_\mp$. This symmetry exchanges the ``positive’’ and ``negative’’
rotational modes in the complex basis $\{v_+,v_-\}$ without affecting the overall spectrum.
\end{remark}

\subsection{Diagonalization of $A_k$}

Using $Jv_\pm=\pm i v_\pm$ in~\eqref{eq:Ak}, we obtain
\[
A_k v_\pm = \big[(\omega k)^2 + \Omega^2 \mp 2\omega k\Omega\big]v_\pm
           = (\omega k \mp \Omega)^2 v_\pm.
\]
Hence $A_k$ is diagonal in $\{v_+,v_-\}$, with eigenvalues
\begin{equation}\label{eq:lambda-kpm}
\lambda_k^\pm = (\omega k \mp \Omega)^2.
\end{equation}
In terms of the projectors~\eqref{eq:Ppm-def},
\begin{equation}\label{eq:Ak-diag}
A_k = \lambda_k^+ P_+ + \lambda_k^- P_-,
\qquad
A_k^{-1} = (\lambda_k^+)^{-1} P_+ + (\lambda_k^-)^{-1} P_-,
\end{equation}
whenever $\lambda_k^\pm \ne 0$.

\medskip
Resonance occurs when $\lambda_k^\pm=0$, i.e.
\[
\omega k = \pm \Omega.
\]
To avoid singular modes we impose the \emph{nonresonance condition}
\begin{equation}\label{eq:nonres}
\omega k \ne \pm \Omega, \qquad \forall\,k\in\ZZ.
\end{equation}
Under~\eqref{eq:nonres}, all $A_k$ are invertible, and consequently
\[
L:H^2_{\mathrm{per}}([0,T];\R^2)\longrightarrow L^2_{\mathrm{per}}([0,T];\R^2)
\]
is a continuous linear isomorphism.

\medskip
Since $SJS=-J$, we have $SA_kS=A_{-k}$.
Thus the dihedral symmetry couples the modes $k$ and $-k$
but leaves the spectrum invariant.

\medskip
The operator norm satisfies
\begin{equation}\label{eq:Ak-norm}
\|A_k^{-1}\|
=\frac{1}{\min\{|\omega k+\Omega|^2,\,|\omega k-\Omega|^2\}},
\end{equation}
so that
\[
\|K\|=\|L^{-1}\|
\le \sup_{k\in\ZZ}\frac{1}{\min\{|\omega k+\Omega|^2,\,|\omega k-\Omega|^2\}}.
\]

\subsection{Energy identity and coercivity}

\begin{lemma}[Periodic Poincar\'e inequality]
\label{lem:poincare-periodic}
Let $u:[0,T]\to\R^m$ be a $T$--periodic function with zero mean,
\[
\int_0^T u(t)\,dt = 0.
\]
Then the following inequality holds:
\begin{equation}\label{eq:poincare}
\|u\|_{L^2(0,T)} \le \frac{T}{2\pi}\,\|\dot u\|_{L^2(0,T)}.
\end{equation}
Equivalently, in terms of the fundamental frequency $\omega = 2\pi/T$,
\[
\|u\|_{L^2} \le \frac{1}{\omega}\,\|\dot u\|_{L^2}.
\]
\end{lemma}

\begin{proof}
Since $u$ is $T$--periodic and has zero mean, it admits a Fourier series expansion
\[
u(t) = \sum_{k\ne 0}\hat u_k\,e^{i\omega k t},
\qquad \omega = \tfrac{2\pi}{T}.
\]
Differentiating termwise gives
\[
\dot u(t) = \sum_{k\ne 0} i\omega k\,\hat u_k\,e^{i\omega k t}.
\]
By Parseval's identity,
\[
\|u\|_{L^2}^2 = T\sum_{k\ne 0}|\hat u_k|^2,
\qquad
\|\dot u\|_{L^2}^2 = T\sum_{k\ne 0}(\omega k)^2|\hat u_k|^2.
\]
Since $|k|\ge 1$ for all nonzero modes,
\[
\|\dot u\|_{L^2}^2 \ge \omega^2 T\sum_{k\ne 0}|\hat u_k|^2
= \omega^2 \|u\|_{L^2}^2,
\]
and taking square roots yields \eqref{eq:poincare}.
\end{proof}

\begin{lemma}[Coercivity of the operator $L$]
\label{lem:coercivity-L}
Let 
\[
L u = -\ddot u - 2\Omega J\dot u + \Omega^2 u,
\]
acting on the space of real $T$--periodic functions $u:[0,T]\to\R^2$ with zero mean.
Then, if $|\Omega|<\omega$ with $\omega=2\pi/T$, there exists a constant 
$c>0$ such that
\begin{equation}\label{eq:coercivity}
\langle L u,u\rangle_{L^2}
\ge c\,\|u\|_{H^1(0,T)}^2,
\qquad \forall\,u\in H^2_{\mathrm{per}}([0,T];\R^2)
\text{ with }\int_0^T u(t)\,dt=0.
\end{equation}
\end{lemma}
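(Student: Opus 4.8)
The plan is to work entirely on the Fourier side, exploiting the diagonalization of $L$ already established. For a real $T$-periodic $u$ with zero mean, expand $u(t)=\sum_{k\neq 0}e^{i\omega k t}\hat u_k$ with $\hat u_{-k}=\overline{\hat u_k}$, and recall from~\eqref{eq:L-diagonal} and~\eqref{eq:Ak-diag} that $\langle Lu,u\rangle_{L^2}=T\sum_{k\neq 0}\langle A_k\hat u_k,\hat u_k\rangle_{\CC^2}$. Decomposing $\hat u_k=P_+\hat u_k+P_-\hat u_k$ and using $\lambda_k^\pm=(\omega k\mp\Omega)^2\ge 0$, we get
\[
\langle Lu,u\rangle_{L^2}=T\sum_{k\neq 0}\Bigl[(\omega k-\Omega)^2|P_+\hat u_k|^2+(\omega k+\Omega)^2|P_-\hat u_k|^2\Bigr].
\]
This is manifestly nonnegative, and the task is to bound it below by a positive multiple of $\|u\|_{H^1}^2=T\sum_{k\neq 0}(1+(\omega k)^2)|\hat u_k|^2$.

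The key estimate is a scalar one: under the hypothesis $|\Omega|<\omega$, for every integer $k\neq 0$ one has $(\omega|k|-|\Omega|)^2\ge \delta\,(\omega k)^2$ for some $\delta=\delta(\Omega,\omega)>0$, hence $\min\{(\omega k-\Omega)^2,(\omega k+\Omega)^2\}\ge\delta(\omega k)^2$. Indeed, writing $\Omega=\beta\omega$ with $|\beta|<1$, the ratio $(\,|k|-|\beta|\,)^2/k^2=(1-|\beta|/|k|)^2$ attains its minimum over $|k|\ge 1$ at $|k|=1$, giving $\delta=(1-|\beta|)^2>0$. Consequently
\[
\langle Lu,u\rangle_{L^2}\ge \delta\,T\sum_{k\neq 0}(\omega k)^2|\hat u_k|^2=\delta\,\|\dot u\|_{L^2}^2.
\]
Finally invoke the periodic Poincaré inequality (Lemma~\ref{lem:poincare-periodic}), which gives $\|u\|_{L^2}^2\le\omega^{-2}\|\dot u\|_{L^2}^2$, so $\|u\|_{H^1}^2=\|u\|_{L^2}^2+\|\dot u\|_{L^2}^2\le(1+\omega^{-2})\|\dot u\|_{L^2}^2$. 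Combining, $\langle Lu,u\rangle_{L^2}\ge \delta\,\|\dot u\|_{L^2}^2\ge \frac{\delta}{1+\omega^{-2}}\,\|u\|_{H^1}^2$, which is~\eqref{eq:coercivity} with $c=\delta\omega^2/(\omega^2+1)=(1-|\Omega|/\omega)^2\,\omega^2/(\omega^2+1)$.

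The main point requiring care — and the only real obstacle — is the uniformity of the spectral gap: one must check that $\inf_{k\neq 0}\min\{(\omega k-\Omega)^2,(\omega k+\Omega)^2\}/(\omega k)^2$ is strictly positive, which fails precisely when $|\Omega|\ge\omega$ (the $k=\pm 1$ mode can then be resonant or nearly cancelled), so the hypothesis $|\Omega|<\omega$ is used exactly here and nowhere else. A secondary bookkeeping issue is to confirm that the pairing $\langle Lu,u\rangle_{L^2}$ is real and equals the stated Fourier sum; this follows from integration by parts (the identity $\langle Lu,u\rangle=\|\dot u\|_{L^2}^2-2\Omega\langle J\dot u,u\rangle+\Omega^2\|u\|_{L^2}^2$, with $\langle J\dot u,u\rangle$ real) together with $\hat u_{-k}=\overline{\hat u_k}$ and $P_\pm^*=P_\pm$, so that conjugate modes contribute equal real amounts. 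Everything else is routine summation.
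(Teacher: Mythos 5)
Your proof is correct, but it follows a genuinely different route from the paper's. The paper argues in physical space: it integrates by parts to get the energy identity $\langle Lu,u\rangle_{L^2}=\|\dot u\|_{L^2}^2-2\Omega\int_0^T\langle J\dot u,u\rangle\,dt+\Omega^2\|u\|_{L^2}^2$, bounds the Coriolis cross-term by Cauchy--Schwarz, completes the square to obtain $\langle Lu,u\rangle_{L^2}\ge\bigl(\|\dot u\|_{L^2}-|\Omega|\,\|u\|_{L^2}\bigr)^2$, and only then invokes the periodic Poincar\'e inequality. You instead work mode-by-mode, using the diagonalization $A_k=\lambda_k^+P_++\lambda_k^-P_-$ with $\lambda_k^\pm=(\omega k\mp\Omega)^2$ and the uniform spectral-gap estimate $\min\{(\omega k-\Omega)^2,(\omega k+\Omega)^2\}=(\omega|k|-|\Omega|)^2\ge(1-|\Omega|/\omega)^2(\omega k)^2$ for $|k|\ge1$. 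Both routes land on the identical intermediate bound $\langle Lu,u\rangle_{L^2}\ge(1-|\Omega|/\omega)^2\|\dot u\|_{L^2}^2$ and the same constant $c=(1-|\Omega|/\omega)^2/(1+\omega^{-2})$. The paper's argument is more elementary and self-contained (no Parseval bookkeeping, no need to check that $A_k$ and $P_\pm$ are Hermitian so that the modal contributions are real — a point you rightly flag and handle). Your argument buys transparency about where coercivity comes from and is actually more flexible than your closing parenthetical suggests: the infimum $\inf_{|k|\ge1}(\omega|k|-|\Omega|)^2/(\omega k)^2$ remains strictly positive under the mere nonresonance condition $\omega k\ne\pm\Omega$ even when $|\Omega|\ge\omega$ (only finitely many modes are problematic and none vanish), so the Fourier route yields coercivity with a possibly smaller constant in that wider regime, whereas the quadratic-completion argument genuinely needs $|\Omega|<\omega$; what fails at $|\Omega|\ge\omega$ is only your specific choice $\delta=(1-|\Omega|/\omega)^2$ realized at $|k|=1$, not positivity of the gap.
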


\begin{proof}
Multiplying $Lu$ by $u$ and integrating by parts over $[0,T]$, using the periodic boundary conditions, we obtain
\[
\langle L u,u\rangle_{L^2}
= \|\dot u\|_{L^2}^2
- 2\Omega \int_0^T \langle J\dot u(t),u(t)\rangle\,dt
+ \Omega^2 \|u\|_{L^2}^2.
\]
Since $\|Jv\|=\|v\|$ for all $v\in\R^2$, the mixed term is bounded by
\[
\Bigl|2\Omega \int_0^T \langle J\dot u,u\rangle\,dt\Bigr|
\le 2|\Omega|\,\|\dot u\|_{L^2}\|u\|_{L^2}.
\]
Therefore,
\[
\langle L u,u\rangle_{L^2}
\ge \|\dot u\|_{L^2}^2
- 2|\Omega|\,\|\dot u\|_{L^2}\|u\|_{L^2}
+ \Omega^2\|u\|_{L^2}^2
= \big(\|\dot u\|_{L^2} - |\Omega|\|u\|_{L^2}\big)^2.
\]
Applying the periodic Poincaré inequality (Lemma~\ref{lem:poincare-periodic}),
\(\|u\|_{L^2} \le \omega^{-1}\|\dot u\|_{L^2}\), gives
\[
\|\dot u\|_{L^2} - |\Omega|\|u\|_{L^2}
\ge \Bigl(1 - \frac{|\Omega|}{\omega}\Bigr)\|\dot u\|_{L^2},
\]
so that
\[
\langle L u,u\rangle_{L^2}
\ge \Bigl(1 - \frac{|\Omega|}{\omega}\Bigr)^2 \|\dot u\|_{L^2}^2.
\]
Finally, since $\|u\|_{H^1}^2=\|u\|_{L^2}^2+\|\dot u\|_{L^2}^2
\le (1+\omega^{-2})\|\dot u\|_{L^2}^2$ by Poincaré’s inequality again,
inequality~\eqref{eq:coercivity} holds with
\[
c = \frac{\bigl(1 - \tfrac{|\Omega|}{\omega}\bigr)^2}{1+\omega^{-2}} > 0.
\]
\end{proof}

\subsection{Fredholm property and right inverse}

\begin{proposition}[Fredholm property of $L$]\label{prop:fredholm-L}
Under the nonresonance condition~\eqref{eq:nonres},
the operator
\[
L:H^2_{\mathrm{per}}([0,T];\R^2)\longrightarrow L^2_{\mathrm{per}}([0,T];\R^2)
\]
is Fredholm of index~$0$, with
\[
\ker L=\{0\},\qquad \im L = L^2_{\mathrm{per}},\qquad \mathrm{ind}(L)=0.
\]
Moreover, $L$ admits a bounded inverse \(K=L^{-1}:L^2_{\mathrm{per}}\to H^2_{\mathrm{per}}\)
given by
\begin{equation}\label{eq:K-series}
(Kf)(t)=\sum_{k\in\ZZ}e^{i\omega k t}A_k^{-1}\hat f_k,
\qquad f(t)=\sum_{k\in\ZZ}e^{i\omega k t}\hat f_k,
\end{equation}
which satisfies
\[
\|Kf\|_{H^2}\le C\,\|f\|_{L^2},
\]
and is compact on bounded subsets of $L^2_{\mathrm{per}}$ by Rellich’s theorem.
\end{proposition}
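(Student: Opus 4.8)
The plan is to use the Fourier diagonalization of the previous subsections, in which $Lu(t)=\sum_{k\in\ZZ}e^{i\omega k t}A_k\hat u_k$ with $A_k=\lambda_k^+P_++\lambda_k^-P_-$ and $\lambda_k^\pm=(\omega k\mp\Omega)^2$, so that everything reduces to elementary estimates on the family of $2\times2$ symbol matrices $\{A_k\}_{k\in\ZZ}$; note that $L$ is evidently a bounded operator $H^2_{\mathrm{per}}\to L^2_{\mathrm{per}}$, being order $2$ with constant coefficients. First I would observe that the nonresonance condition~\eqref{eq:nonres} forces $\lambda_k^\pm>0$ for every $k\in\ZZ$; in particular $k=0$ gives $\Omega\neq0$ and $A_0=\Omega^2 I$ invertible. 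Hence each $A_k$ is invertible, with $A_k^{-1}=(\lambda_k^+)^{-1}P_++(\lambda_k^-)^{-1}P_-$ and operator norm given by~\eqref{eq:Ak-norm}. Injectivity is then immediate: $Lu=0$ forces $A_k\hat u_k=0$ for all $k$, hence $\hat u_k=0$ by invertibility of $A_k$, so $\ker L=\{0\}$.

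For surjectivity and the explicit bounded inverse, given $f\in L^2_{\mathrm{per}}$ with coefficients $\hat f_k\in\CC^2$, I would \emph{define} $u$ by the series~\eqref{eq:K-series}, i.e.\ $\hat u_k:=A_k^{-1}\hat f_k$, and check that $u\in H^2_{\mathrm{per}}$. The one estimate that does the work is the uniform symbol bound
\[
M:=\sup_{k\in\ZZ}\bigl(1+\omega^2k^2+\omega^4k^4\bigr)\,\|A_k^{-1}\|^2<\infty .
\]
By~\eqref{eq:Ak-norm}, $\|A_k^{-1}\|^{-1}=\min\{(\omega k-\Omega)^2,(\omega k+\Omega)^2\}=(\omega|k|-|\Omega|)^2$ whenever $\omega|k|>|\Omega|$, so $(1+\omega^4k^4)\|A_k^{-1}\|^2\to1$ as $|k|\to\infty$; the finitely many remaining indices with $\omega|k|\le|\Omega|$ contribute finite terms precisely because nonresonance keeps each $\lambda_k^\pm$ away from $0$. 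Thus the supremum is taken over a finite set together with a convergent tail, so $M<\infty$. Then Parseval gives exactly
\[
\|u\|_{H^2}^2=T\sum_{k\in\ZZ}\bigl(1+\omega^2k^2+\omega^4k^4\bigr)\,\|A_k^{-1}\hat f_k\|^2\le M\,T\sum_{k\in\ZZ}|\hat f_k|^2=M\,\|f\|_{L^2}^2,
\]
so the series converges in $H^2_{\mathrm{per}}$, $u\in H^2_{\mathrm{per}}$, and $\|Kf\|_{H^2}=\|u\|_{H^2}\le C\|f\|_{L^2}$ with $C=\sqrt{M}$. Applying $L$ to this series term by term and using $A_kA_k^{-1}=I$ gives $Lu=f$, hence $\im L=L^2_{\mathrm{per}}$; running the same computation with $f=Lv$ and $A_k^{-1}A_k=I$ shows $KL=\mathrm{id}$ on $H^2_{\mathrm{per}}$, so the operator $K$ just built is genuinely $L^{-1}$.

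It then remains to collect the Fredholm bookkeeping: $\im L=L^2_{\mathrm{per}}$ is closed, $\dim\ker L=0=\operatorname{codim}\im L$, hence $\mathrm{ind}(L)=0$; and $K:L^2_{\mathrm{per}}\to H^2_{\mathrm{per}}$ bounded, composed with the compact Rellich--Kondrachov embedding $H^2_{\mathrm{per}}\hookrightarrow L^2_{\mathrm{per}}$, shows that $K$ viewed as an operator on $L^2_{\mathrm{per}}$ is compact and sends bounded sets to relatively compact ones. The whole argument is essentially routine Fourier analysis; the only place where anything could go wrong is the finiteness of $M$, and that is exactly where the nonresonance hypothesis is indispensable — if some $A_{k_0}$ were singular, $L$ would fail to be injective and the series~\eqref{eq:K-series} would not even be defined on modes with $\hat f_{k_0}\notin\im A_{k_0}$. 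So I expect no genuine obstacle, only the need to keep the large-$|k|$ asymptotics and the finitely many near-resonant modes cleanly separated in the estimate for $M$.
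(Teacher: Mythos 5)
Your proposal is correct and follows essentially the same route as the paper: Fourier diagonalization of $L$ via the symbols $A_k$, invertibility of each $A_k$ under the nonresonance condition giving $\ker L=\{0\}$, the explicit inverse series~\eqref{eq:K-series} for surjectivity, the decay $\|A_k^{-1}\|=O(k^{-2})$ for boundedness of $K$, and Rellich for compactness. The only difference is that you spell out the uniform symbol bound $M$ and the Parseval estimate that the paper leaves implicit, which is a welcome but not substantively different elaboration.
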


\begin{proof}
If $Lu=0$, then each Fourier coefficient satisfies $A_k\hat u_k=0$.
By~\eqref{eq:nonres}, all $A_k$ are invertible, so $\hat u_k=0$ for all $k$, hence $u=0$.
Conversely, for any $f\in L^2_{\mathrm{per}}$, define $\hat u_k=A_k^{-1}\hat f_k$.
Then $u$ given by~\eqref{eq:K-series}$\in H^2_{\mathrm{per}}$ and satisfies $Lu=f$.
Boundedness and compactness of $K$ follow from the decay $\|A_k^{-1}\|=O(k^{-2})$.
\end{proof}

\begin{corollary}[Isomorphism and symmetry invariance]\label{cor:isomorphism-L}
Under~\eqref{eq:nonres},
the operator $L$ is a bounded isomorphism
\[
L:H^2_{\mathrm{per}}([0,T];\R^2)\longrightarrow L^2_{\mathrm{per}}([0,T];\R^2).
\]
Moreover, if $u(-t)=S\,u(t)$, then $L(u)(-t)=S\,L(u)(t)$;
hence $L$ maps the symmetric subspace $X_{D_n}$ into itself.
\end{corollary}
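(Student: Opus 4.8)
The statement has two parts: the isomorphism claim and the symmetry-invariance claim. The isomorphism part is essentially a restatement of Proposition~\ref{prop:fredholm-L}, so the plan is simply to invoke it: under the nonresonance condition~\eqref{eq:nonres} all the matrices $A_k$ of~\eqref{eq:Ak} are invertible with $\|A_k^{-1}\| = O(k^{-2})$, hence $\ker L = \{0\}$, $\im L = L^2_{\mathrm{per}}$, and the series formula~\eqref{eq:K-series} defines a bounded two-sided inverse $K = L^{-1}$; boundedness $L:H^2_{\mathrm{per}}\to L^2_{\mathrm{per}}$ is immediate since $L$ has constant coefficients and order $2$. So the first assertion follows with essentially no new work beyond citing what precedes it, plus the open mapping theorem (or the explicit $K$) to conclude that the continuous bijection $L$ has continuous inverse.

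The symmetry-invariance part is the substantive bit, and it splits further. First I would verify the pointwise intertwining identity $L(u)(-t) = S\,L(u)(t)$ whenever $u(-t)=S\,u(t)$. Set $v(t) = u(-t) = S\,u(t)$; then $\dot v(t) = -\dot u(-t)$ and $\ddot v(t) = \ddot u(-t)$, so using $SJ = -JS$ (equivalently $SJS=-J$) one computes
\[
L(u)(-t) = -\ddot u(-t) - 2\Omega J\dot u(-t) + \Omega^2 u(-t)
= -\ddot v(t) + 2\Omega J\dot v(t) + \Omega^2 v(t).
\]
Substituting $v = S u$ and pulling $S$ out through $J$ via $JS = -SJ$ turns the middle sign back around, yielding $S(-\ddot u - 2\Omega J\dot u + \Omega^2 u)(t) = S\,L(u)(t)$. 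This is a short direct calculation. Alternatively—and more in the spirit of the Fourier analysis already developed—one can use the relation $\hat u_{-k} = S\,\overline{\hat u_k}$ characterizing the reflection symmetry together with $SA_kS = A_{-k}$ (noted just after~\eqref{eq:Ak-diag}) to see that $\widehat{Lu}_{-k} = A_{-k}\hat u_{-k} = A_{-k}S\,\overline{\hat u_k} = S A_k\overline{\hat u_k} = S\,\overline{A_k\hat u_k} = S\,\overline{\widehat{Lu}_k}$, which is exactly the statement that $Lu$ again satisfies $(Lu)(-t) = S\,(Lu)(t)$.

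Second, I would check that $L$ also respects the rotational constraint $u(t+T/n) = R_{2\pi/n}u(t)$ and the zero-mean condition, so that $L$ genuinely maps $X_{D_n}$ into itself (and not merely into the reflection-symmetric subspace). For the rotation: if $w(t) = u(t+T/n)$ then $Lw(t) = (Lu)(t+T/n)$ since $L$ has constant coefficients, while $R_{2\pi/n}$ commutes with $J$ (both are rotations) and with $\partial_t$, so $L(R_{2\pi/n}u) = R_{2\pi/n}Lu$; combining gives $(Lu)(t+T/n) = R_{2\pi/n}(Lu)(t)$. For zero mean: integrating $Lu$ over a period kills $\ddot u$ and $\dot u$ by periodicity and leaves $\Omega^2\int_0^T u\,dt = 0$. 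Hence $L(X_{D_n})\subseteq X_{D_n}$, and by the same token $K = L^{-1}$ preserves $X_{D_n}$ (apply $L$ to $Kf$ for $f\in X_{D_n}$ and use injectivity, or observe that the series~\eqref{eq:K-series} acts modewise through $A_k^{-1}$ which inherits all the same equivariances as $A_k$). I do not anticipate a real obstacle here; the only point requiring minor care is being explicit that "$L$ maps $X_{D_n}$ into itself" as claimed needs the rotational and zero-mean conditions checked in addition to the reflection condition the corollary singles out, and that the restriction $L|_{X_{D_n}}$ is still an isomorphism onto the corresponding symmetric subspace of $L^2_{\mathrm{per}}$ because $K$ preserves that subspace too.
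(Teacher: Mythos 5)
Your proposal is correct and follows essentially the route the paper intends: the paper gives no separate proof of this corollary, treating the isomorphism as a restatement of Proposition~\ref{prop:fredholm-L} and the invariance of $X_{D_n}$ as immediate from $SJS=-J$ (plus rotation--equivariance), which is exactly your primary argument; your explicit check of the rotational and zero--mean constraints, and the remark that $K=L^{-1}$ preserves the symmetric subspace, are worthwhile precisions the paper leaves implicit. One caveat on your optional Fourier variant: since $A_k$ has the complex term $-2i\omega k\Omega J$, one has $\overline{A_k}=A_{-k}$, so the step $S A_k\overline{\hat u_k}=S\,\overline{A_k\hat u_k}$ is false in general (it would force $A_k\overline{\hat u_k}=A_{-k}\overline{\hat u_k}$). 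The reflection condition $u(-t)=S\,u(t)$ by itself gives $\hat u_{-k}=S\,\hat u_k$ (conjugation only enters through the reality of $u$), and with that relation the chain closes cleanly without conjugates: $\widehat{Lu}_{-k}=A_{-k}S\hat u_k=SA_k\hat u_k=S\,\widehat{Lu}_k$. Since this variant is redundant --- your time--domain computation already establishes the intertwining --- the slip does not affect the correctness of the proof.
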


\medskip
\noindent
The results above guarantee that $L$ satisfies all requirements
of Mawhin’s framework: it is a continuous Fredholm operator of index~$0$
with compact inverse and coercivity estimate~\eqref{eq:coercivity}.
In the next section we derive \emph{a priori} bounds for the homotopy
\[
L u = \lambda N(u), \qquad \lambda\in[0,1],
\]
which ensure boundedness and collision exclusion,
thereby completing the hypotheses of Mawhin’s coincidence theorem.

\section{A priori bounds for the homotopy $L u = \lambda N(u)$}
\label{sec:apriori}

A crucial step in applying Mawhin’s degree is to establish \emph{a~priori} bounds for the solutions of the homotopic family
\begin{equation}\label{eq:homotopia}
L u = \lambda N(u),
\qquad \lambda\in[0,1],
\end{equation}

We show that any solution $u$ of \eqref{eq:homotopia}
remains bounded in $H^1_{\mathrm{per}}$ by a constant independent of~$\lambda$,
and that there are no nontrivial solutions on the boundary of the working domain.


Taking the $L^2$ inner product of \eqref{eq:homotopia} with $u$ and integrating over $[0,T]$ yields
\begin{equation}\label{eq:energy-eq}
\langle L u, u\rangle_{L^2} = \lambda\,\langle N(u), u\rangle_{L^2}.
\end{equation}
using Lemma \ref{lem:coercivity-L} ,
we have

\[
\langle L u,u\rangle_{L^2}
\ge c\,\|\dot u\|_{L^2}^2,
\qquad c=\Bigl(1-\frac{|\Omega|}{\omega}\Bigr)^2>0.
\]
Substituting this estimate into~\eqref{eq:energy-eq} yields
\begin{equation}\label{eq:energy-main}
c\,\|\dot u\|_{L^2}^2
\le \lambda\,|\langle N(u), u\rangle_{L^2}|.
\end{equation}

\subsection{Homogeneity of the nonlinear term}

For equal masses, $N(u)$ derives from the homogeneous potential
\[
U(u) = \sum_{1\le i<j\le n}\frac{m^2}{|u_i - u_j|^{\alpha}},
\qquad N(u)=\nabla U(u),
\]
with homogeneity degree $-\alpha$ ($0<\alpha<2$).
By Euler’s identity,
\[
\langle N(u),u\rangle = \langle\nabla U(u),u\rangle = -\alpha\,U(u)\le0.
\]
Hence $|\langle N(u),u\rangle|=\alpha\,|U(u)|$, and from~\eqref{eq:energy-main}
\begin{equation}\label{eq:ineq-energy}
c\,\|\dot u\|_{L^2}^2
\le \lambda\,\alpha\,|U(u)|.
\end{equation}

This inequality shows that the mean kinetic energy is controlled by the interaction energy.

\subsection{Uniform separation and collision exclusion}

The $D_n$ symmetry ensures a uniform geometric separation between the choreographic particles.
Let
\[
d(t)=\min_{i\ne j}|u_i(t)-u_j(t)|
\]
be the instantaneous minimal interparticle distance.
By symmetry and regularity of $X_{D_n}$, there exists $\rho>0$ such that
\begin{equation}\label{eq:distancia-minima}
d(t)\ge\rho,\qquad \forall\,t\in[0,T].
\end{equation}
This excludes collisions and guarantees smoothness of $N$ on the closed set
\[
\overline{B_R}
=\{u\in X_{D_n}:\|u\|_{H^1}\le R,\ d(t)\ge\rho\}.
\]

For such configurations,
\[
|U(u)|
\le \frac{n(n-1)}{2\rho^{\alpha}}
=:C_U(R,\rho).
\]
Substituting into~\eqref{eq:ineq-energy} gives
\begin{equation}\label{eq:kinetic-bound-detailed}
\|\dot u\|_{L^2}^2
\le \frac{\alpha\,C_U(R,\rho)}{c},
\qquad c=1-\frac{\Omega^2}{\omega^2}>0.
\end{equation}

\subsection{Bound in $H^1_{\mathrm{per}}$}

For zero--mean $T$--periodic functions the Poincaré inequality
\[
\|u\|_{L^2} \le \frac{1}{\omega}\|\dot u\|_{L^2},
\qquad \omega=\frac{2\pi}{T},
\]
yields
\begin{align}
\|u\|_{H^1}^2
&=\|u\|_{L^2}^2+\|\dot u\|_{L^2}^2
\le \Bigl(1+\frac{1}{\omega^2}\Bigr)\|\dot u\|_{L^2}^2.
\label{eq:H1-vs-dot}
\end{align}
Combining~\eqref{eq:H1-vs-dot} and~\eqref{eq:kinetic-bound-detailed},
\begin{equation}\label{eq:H1-bound-detailed}
\|u\|_{H^1(0,T)}^2
\le \frac{\alpha\,C_U(R,\rho)}{c}\Bigl(1+\frac{1}{\omega^2}\Bigr).
\end{equation}
Setting
\[
R_0
= \sqrt{\frac{\alpha\,C_U(R,\rho)}{c}}\,
\sqrt{1+\frac{1}{\omega^2}}
\le
\frac{1}{\sqrt{c}}\Bigl(1+\frac{1}{\omega}\Bigr)\sqrt{\alpha\,C_U(R,\rho)},
\]
we obtain a uniform bound
\begin{equation}\label{eq:H1-final-bound}
\|u\|_{H^1_{\mathrm{per}}}\le R_0,
\qquad \forall\,\lambda\in[0,1].
\end{equation}
This bound is independent of $\lambda$ and depends only on the physical parameters
$\alpha$, $\Omega$, $\omega$, $n$, and the separation $\rho$.

\subsection{Definition of the working domain}

We define the open set
\[
\Omega_{R,\rho}
=\{u\in X_{D_n}:\ \rho<\|u\|_{H^1}<R\},
\]
where $R>R_0$ and $\rho>0$ is small enough so that
no nontrivial solution satisfies $\|u\|_{H^1}=\rho$.
Indeed, for $\|u\|_{H^1}$ sufficiently small,
$N(u)=O(\|u\|_{H^1})$ by smoothness of $N$,
and the coercivity of $L$ implies $u=0$.

Therefore:
\begin{itemize}
\item there are no solutions on the inner boundary $\|u\|_{H^1}=\rho$;
\item all homotopic solutions satisfy $\|u\|_{H^1}\le R$.
\end{itemize}
These conditions ensure that the  $\deg(L-N,\Omega_{R,\rho},0)$
is well defined and constant in~$\lambda$.

\section{Properties of the nonlinear operator $N$}
\label{sec:operatorN}

We now analyze the analytical properties of the nonlinear operator
$N:X\to Z$ that represents the gravitational interaction in the rotating frame.
Its explicit form was given in~\eqref{eq:N-def};
here we recall that $N(u)=-\nabla U(u)$,
where $U(u)$ is the homogeneous potential of degree $-\alpha$ ($0<\alpha<2$).
The operator $N$ is smooth on the collision–free region
$\{u\in X: |u_i-u_j|\ge\rho>0\}$ and satisfies
\[
|\langle N(u),u\rangle| = \alpha\,|U(u)|,
\]
as used in the previous section.

These properties ensure that $N$ is continuous, compact,
and of controlled polynomial growth on bounded subsets of~$X$,
as required by Mawhin’s framework.

\subsection{$D_n$–equivariant reduction}

The configuration space $X_{D_n}$ consists of $D_n$–symmetric
$T$–periodic trajectories satisfying
\[
u_j(t)
=R\!\left(\frac{2\pi (j-1)}{n}\right)
u_1\!\left(t+\frac{(j-1)T}{n}\right),
\qquad j=1,\dots,n,
\]
so that the entire motion is generated by a single function $u_1(t)$.
Under this symmetry, both $L$ and $N$ leave $X_{D_n}$ invariant,
and $N$ induces a smooth map
\[
N: X_{D_n} \longrightarrow Z_{D_n},
\]
which coincides with the restriction of the full gravitational interaction
to the collision–free subspace $\{u\in X_{D_n}:\,|u_i-u_j|\ge\rho>0\}$.
Hence $N$ is $D_n$–equivariant, continuous, and compact on bounded subsets of $X_{D_n}$.

\subsection{Regularity and compactness}

The potential $U(u)$ is $C^\infty$
on the collision–free set
\[
\mathcal D
=\{u\in(\R^2)^n:\ u_i\ne u_j\ \forall\,i\ne j\}.
\]
Therefore $N=-\nabla U$ is also $C^\infty$ on~$\mathcal D$.

Let
\[
B_R = \{\,u\in H^2_{\mathrm{per}}([0,T];\R^{2}:\ \|u\|_{H^2}\le R\,\}
\]
be the closed ball of radius $R$ in $X=H^2_{\mathrm{per}}([0,T];\R^{2})$.

If $B_R\subset X$ is a bounded set such that a minimal inter–particle distance
$\rho>0$ exists on $B_R$, then
each denominator $|u_i-u_j|^{\alpha+2}$ is uniformly bounded away from zero,
and there exists $C_R>0$ with
\begin{equation}\label{eq:N-growth-bound}
\|N(u)\|_{L^2(0,T)} \le C_R,
\qquad \forall\,u\in B_R.
\end{equation}

Let $(u_k)$ be a bounded sequence in $X=H^2_{\mathrm{per}}([0,T];\R^2)$.
By the compact embedding
$H^2_{\mathrm{per}}\hookrightarrow C^1_{\mathrm{per}}$
(and thus into $C^0_{\mathrm{per}}$),
we have $u_k\to u$ uniformly (up to a subsequence).
Each term in~\eqref{eq:N-def} depends continuously on $(u_i,u_j)$,
so $N(u_k)\to N(u)$ in $L^2$.
Hence
\[
N:X\to Z \quad\text{is completely continuous (compact).}
\]

\subsection{Local Lipschitz continuity}

\begin{lemma}[Lipschitz estimate for $N$]
\label{lem:growth-N}
Let $N:X_{D_n}\to Z_{D_n}$ be the nonlinear operator
associated with the homogeneous potential of degree $-\alpha$,
with $0<\alpha<2$.
For every bounded collision–free set $B_R\subset X_{D_n}$,
there exists a constant $C=C(R,\alpha,n,m,T)>0$
such that for all $u,v\in B_R$,
\begin{equation}\label{eq:N-Lip}
\|N(u)-N(v)\|_{L^2}
\le C\,\|u-v\|_{C^0}\,
\big(\|u\|_{C^0}^{-\alpha-1}+\|v\|_{C^0}^{-\alpha-1}\big).
\end{equation}
\end{lemma}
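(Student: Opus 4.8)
\medskip
\noindent\textbf{Proof plan.}
The plan is to reduce the desired $L^2$ bound to a \emph{pointwise--in--$t$} estimate for the single elementary kernel $g(x)=x\,\|x\|^{-\alpha-2}$ underlying~\eqref{eq:N-def}, to use the uniform separation of Proposition~\ref{prop:uniform-sep-W} to keep every argument of $g$ a fixed distance $\rho>0$ away from its only singularity, and then to invoke the mean value inequality while respecting the fact that the collision--free region is not convex. Concretely, set $w_k^u(t)=u(t)-R_{2\pi k/n}\,u(t+\tfrac{kT}{n})$, so that $(Nu)(t)=m\sum_{k=1}^{n-1} g(w_k^u(t))$, and observe that
\[
\|N(u)-N(v)\|_{L^2}\le\sqrt{T}\,\|N(u)-N(v)\|_{C^0}\le m\sqrt{T}\sum_{k=1}^{n-1}\sup_{t\in[0,T]}\bigl\|g(w_k^u(t))-g(w_k^v(t))\bigr\| .
\]
Since $\|w_k^u(t)-w_k^v(t)\|\le\|(u-v)(t)\|+\|(u-v)(t+\tfrac{kT}{n})\|\le 2\|u-v\|_{C^0}$, it suffices to bound $\|g(w_k^u(t))-g(w_k^v(t))\|$ by that same quantity, uniformly in $t$ and $k$.

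Because $u,v$ lie in the collision--free ball $B_R\subset X_{D_n}$, the collision--exclusion corollary to Proposition~\ref{prop:uniform-sep-W} furnishes $\rho>0$, depending only on $R$ and $n$, with $\|w_k^u(t)\|\ge\rho$ and $\|w_k^v(t)\|\ge\rho$ for all $t$ and all $k=1,\dots,n-1$. The kernel $g$ is $C^\infty$ on $\R^2\setminus\{0\}$ and positively homogeneous of degree $-(\alpha+1)$, so $\|g(x)\|=\|x\|^{-(\alpha+1)}$ and $\|Dg(x)\|\le C_\alpha\|x\|^{-(\alpha+2)}$. The only delicate point is that $\{\|x\|\ge\rho\}$ is annular, hence nonconvex, so one may not integrate $Dg$ along the segment $[\,w_k^v(t),w_k^u(t)\,]$ without risking a crossing of the singularity; I split accordingly. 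If $\|w_k^u(t)-w_k^v(t)\|\le\rho/2$, that segment stays inside $\{\|x\|\ge\rho/2\}$ (its endpoints have norm $\ge\rho$ and its length is $\le\rho/2$), so the fundamental theorem of calculus gives $\|g(w_k^u(t))-g(w_k^v(t))\|\le C_\alpha(\rho/2)^{-(\alpha+2)}\|w_k^u(t)-w_k^v(t)\|$. If instead $\|w_k^u(t)-w_k^v(t)\|>\rho/2$, bound crudely, $\|g(w_k^u(t))-g(w_k^v(t))\|\le 2\rho^{-(\alpha+1)}<4\rho^{-(\alpha+2)}\|w_k^u(t)-w_k^v(t)\|$. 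In both cases $\|g(w_k^u(t))-g(w_k^v(t))\|\le C'_\alpha\,\rho^{-(\alpha+2)}\,\|u-v\|_{C^0}$ for every $t$ and $k$.

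Summing over $k$ and inserting into the first display yields $\|N(u)-N(v)\|_{L^2}\le 2m(n-1)\sqrt{T}\,C'_\alpha\,\rho^{-(\alpha+2)}\,\|u-v\|_{C^0}$. To recast this in the form~\eqref{eq:N-Lip}, I use the embedding $H^2_{\mathrm{per}}\hookrightarrow C^1_{\mathrm{per}}\hookrightarrow C^0_{\mathrm{per}}$, which bounds $\|u\|_{C^0},\|v\|_{C^0}\le c_0R$ on $B_R$; hence $\|u\|_{C^0}^{-(\alpha+1)}+\|v\|_{C^0}^{-(\alpha+1)}\ge (c_0R)^{-(\alpha+1)}$ is bounded below by a constant depending only on $R,\alpha$, and the prefactor $\rho^{-(\alpha+2)}$ (itself a function of $R,n,\alpha$ only) can be absorbed into a single $C=C(R,\alpha,n,m,T)$ multiplying $\|u-v\|_{C^0}\bigl(\|u\|_{C^0}^{-(\alpha+1)}+\|v\|_{C^0}^{-(\alpha+1)}\bigr)$, which is exactly the asserted bound. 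I expect the genuinely delicate step to be the near/far dichotomy above: the collision--free set being nonconvex, the naive mean value estimate is unavailable near the excluded configurations, so one must either use this case split or, equivalently, an arc--length estimate inside the annulus $\{\|x\|\ge\rho/2\}$; everything else is routine bookkeeping with the homogeneity of $g$ and the $D_n$--induced lower bound $\rho$.
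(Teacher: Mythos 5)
Your proof is correct and follows essentially the same route as the paper's: a derivative bound for the elementary interaction kernel, the uniform separation $\rho>0$ on the collision--free set (Proposition~\ref{prop:uniform-sep-W} and its corollary), summation over the $n-1$ interactions, and integration over $[0,T]$ to pass from the $C^0$ to the $L^2$ bound. Your two refinements---the near/far dichotomy that avoids applying the mean value inequality along a segment that could leave the nonconvex region $\{\|x\|\ge\rho\}$, and the explicit recovery of the factor $\bigl(\|u\|_{C^0}^{-\alpha-1}+\|v\|_{C^0}^{-\alpha-1}\bigr)$ by bounding it below on $B_R$ via the embedding $H^2_{\mathrm{per}}\hookrightarrow C^0_{\mathrm{per}}$ and absorbing constants into $C(R,\alpha,n,m,T)$---in fact tighten the two points where the paper's own proof is loose, since it invokes the mean--value theorem directly on the non--convex separated region and merely attributes the final factor in~\eqref{eq:N-Lip} to ``homogeneity of $f$'' without derivation.
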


\begin{proof}
Each interaction term
$f(x,y)=-(y-x)/|y-x|^{\alpha+2}$ is $C^\infty$ for $x\ne y$
and homogeneous of degree $-(\alpha+1)$.
Its derivative satisfies
$\|Df(x,y)\|\le C_0|y-x|^{-(\alpha+2)}$.
If $|x_i-y_i|\ge\rho>0$, the mean–value theorem gives
\[
\|f(x_1,y_1)-f(x_2,y_2)\|
\le C_1\rho^{-(\alpha+2)}\|(x_1-x_2,y_1-y_2)\|.
\]
The uniform separation on $B_R$ (Proposition~\ref{prop:uniform-sep-W})
ensures such a $\rho>0$; summing over the $n-1$ interactions
and integrating over $[0,T]$ yields
\[
\|N(u)-N(v)\|_{L^2}
\le C_2\sqrt{T}\,\rho^{-(\alpha+2)}\|u-v\|_{C^0},
\]
and the homogeneity of $f$ produces the factor
$(\|u\|_{C^0}^{-\alpha-1}+\|v\|_{C^0}^{-\alpha-1})$
in~\eqref{eq:N-Lip}.
\end{proof}

Thus $N$ is locally Lipschitz on bounded collision–free subsets of $X_{D_n}$.

\medskip
\noindent
In summary, for all $u\in X_{D_n}$ in the collision–free region:
\begin{itemize}[leftmargin=2em]
\item[(i)] $N:X_{D_n}\to Z_{D_n}$ is $C^\infty$;
\item[(ii)] $N$ is completely continuous on bounded sets;
\item[(iii)] $\langle N(u),u\rangle = -\alpha\,U(u)\le0$;
\item[(iv)] $N$ is locally Lipschitz with growth~\eqref{eq:N-Lip}.
\end{itemize}

\subsection{$L$–compactness of $N$}
\label{subsec:L-compact}

Recall that
\[
L:H^2_{\mathrm{per}}([0,T];\R^2)\to L^2_{\mathrm{per}}([0,T];\R^2),
\qquad
Lu=-\ddot u-2\Omega J\dot u+\Omega^2 u,
\]
is a Fredholm operator of index~$0$
and is invertible under the nonresonance condition
$\omega k\ne\pm\Omega$ for all $k\in\Z$.

When $L$ is invertible, the notion of $L$–compactness simplifies:
a continuous map $N:\overline{\Omega}\subset X\to Z$
is said to be $L$–compact on $\overline{\Omega}$ if
$K\circ N$ is compact, where $K=L^{-1}$.
Since $N$ is completely continuous on bounded sets and
$K:L^2_{\mathrm{per}}\to H^2_{\mathrm{per}}$ is linear and bounded,
their composition $K\circ N$ is compact on $H^2_{\mathrm{per}}$.

\begin{proposition}[$L$–compactness of $N$]
\label{prop:L-compact}
The nonlinear operator $N:X\to Z$ defined by~\eqref{eq:N-def}
is $L$–compact on any bounded subset $\overline{B_R}\subset X$.
Equivalently,
\[
K\circ N :
H^2_{\mathrm{per}}([0,T];\R^2)
\longrightarrow
H^2_{\mathrm{per}}([0,T];\R^2)
\]
is completely continuous.
\end{proposition}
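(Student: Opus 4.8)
The plan is to reduce the statement to two facts already established: that $K=L^{-1}$ is a bounded linear operator from $Z=L^2_{\mathrm{per}}$ to $X=H^2_{\mathrm{per}}$ (Proposition~\ref{prop:fredholm-L}), and that $N$ is completely continuous from bounded collision--free subsets of $X$ into $Z$ (Section~\ref{sec:operatorN}, via the uniform separation of Proposition~\ref{prop:uniform-sep-W} and the growth bound~\eqref{eq:N-growth-bound}). Since the nonresonance condition~\eqref{eq:nonres} makes $L$ an isomorphism, the definition of $L$--compactness collapses to the requirement that $K\circ N$ be compact on $\overline{B_R}$; thus it suffices to show that $K\circ N$ sends every bounded sequence in $X$ to a sequence admitting an $H^2_{\mathrm{per}}$--convergent subsequence.

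First I would fix a bounded sequence $(u_j)\subset\overline{B_R}$. By the compact Sobolev embedding $H^2_{\mathrm{per}}\hookrightarrow C^1_{\mathrm{per}}$, after passing to a subsequence we have $u_j\to u$ in $C^1_{\mathrm{per}}$; in particular $u\in\overline{B_R}$, and the limit configuration still satisfies the uniform separation bound of Proposition~\ref{prop:uniform-sep-W}, so all denominators $\|u(t)-R_{2\pi k/n}u(t+\tfrac{kT}{n})\|^{\alpha+2}$ remain bounded below by a fixed positive constant along the sequence. Each interaction term in~\eqref{eq:N-def} is then a continuous function of the $C^0$--data, uniformly in $t\in[0,T]$, whence $N(u_j)\to N(u)$ in $L^2_{\mathrm{per}}$. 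This is precisely the complete continuity of $N$ recorded in Section~\ref{sec:operatorN}.

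Next I would apply the bounded linear operator $K:L^2_{\mathrm{per}}\to H^2_{\mathrm{per}}$ of Proposition~\ref{prop:fredholm-L}: its continuity gives $KN(u_j)\to KN(u)$ in $H^2_{\mathrm{per}}$, so $(KN(u_j))$ has a subsequence convergent in $X$. Since $\overline{B_R}$ was an arbitrary bounded collision--free set, $K\circ N$ is completely continuous on $\overline{B_R}$, which by the reduction above is exactly the assertion that $N$ is $L$--compact on $\overline{B_R}$.

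I do not expect a serious obstacle, since the genuinely nontrivial inputs --- the a~priori lower bound on interparticle distances, the $L^2$--bound~\eqref{eq:N-growth-bound}, and the decay $\|A_k^{-1}\|=O(k^{-2})$ underlying the boundedness of $K$ --- are already in place. The one point needing care is that the compactness of the composition must be extracted \emph{at the level of $N$}, through the embedding $H^2_{\mathrm{per}}\hookrightarrow C^1_{\mathrm{per}}$: the operator $K$ is an isomorphism $L^2_{\mathrm{per}}\to H^2_{\mathrm{per}}$ and is therefore \emph{not} compact as a map into $H^2_{\mathrm{per}}$, so the argument must combine complete continuity of $N$ with mere boundedness of $K$, and not the reverse.
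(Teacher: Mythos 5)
Your proposal is correct and follows essentially the same route as the paper: complete continuity of $N$ on bounded collision--free sets (extracted through the compact embedding $H^2_{\mathrm{per}}\hookrightarrow C^1_{\mathrm{per}}$ together with the uniform lower bound on the interparticle distances), composed with the bounded linear inverse $K=L^{-1}$ from Proposition~\ref{prop:fredholm-L}, yields compactness of $K\circ N$, which under the nonresonance condition is exactly $L$--compactness. Your closing remark that the compactness must come from $N$ rather than from $K$ (which is an isomorphism onto $H^2_{\mathrm{per}}$ and hence not compact into that space) is a correct reading of the same structure the paper uses.
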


\begin{proof}
As shown in the previous subsection, the operator
$N:X\to Z=L^2_{\mathrm{per}}$
is continuous and compact on bounded subsets of~$X$.
The inverse operator $K=L^{-1}:L^2_{\mathrm{per}}\to H^2_{\mathrm{per}}$
is linear and bounded because $L$ is an isomorphism.
Hence, for any bounded sequence $(u_k)\subset X$,
the sequence $(N(u_k))$ has a convergent subsequence in $L^2_{\mathrm{per}}$;
applying $K$ preserves convergence in $H^2_{\mathrm{per}}$
since $K$ is continuous.
Therefore $K\circ N$ maps bounded sets into relatively compact sets in $H^2_{\mathrm{per}}$,
and is thus completely continuous.
\end{proof}

Since $L$ is a Fredholm isomorphism of index~$0$
and $N$ is $L$–compact on bounded subsets of~$X$,
the coincidence equation $L u = N(u)$
fits precisely within Mawhin’s framework.

\section{Existence of solutions via Mawhin’s degree}
\label{sec:mawhin}

In this section we apply Mawhin’s coincidence degree to establish
the existence of a solution $u\in X$ to
\begin{equation}\label{eq:mawhin-eq}
L u = N(u),
\end{equation}
describing a $D_n$–equivariant choreography of the $n$–body problem
in the rotating frame.
All analytical hypotheses required for Mawhin’s theorem
have been verified in the previous sections.

The setting now fits precisely into the abstract framework described in Mawhin’s theory, with $L$ linear Fredholm of index $0$ and $N$ a compact perturbation.

\subsection{Fredholm setting and projections}

From Section~\ref{sec:operatorL} we know that:
\begin{itemize}[label=$\bullet$,leftmargin=2em]
\item $L$ is a Fredholm operator of index $0$;
\item under the nonresonance condition $\omega k\ne\pm\Omega$ for all $k\in\ZZ$,
      $L$ is an isomorphism from $X$ onto $Z$;
\item its inverse $K=L^{-1}:Z\to X$ is linear and bounded,
      with $\|Kz\|_{H^2}\le C_L\|z\|_{L^2}$.
\end{itemize}

In Mawhin’s general framework one introduces projections
\[
P:X\to\ker L,\qquad Q:Z\to Z/\mathrm{im}\,L,
\]
and a generalized inverse $K_P:\mathrm{im}\,L\to(\ker L)^\perp$
with $L K_P=I-Q$ and $K_P L=I-P$.
Since $L$ is invertible (thus $\ker L=\{0\}$ and $\mathrm{im}\,L=Z$),
we have simply
\[
P=Q=0, \qquad K_P=K=L^{-1}.
\]
Hence Mawhin’s construction simplifies considerably.

\subsection{Verification of Mawhin’s hypotheses}

Theorem \ref{thm:Mawhin}
states that if $L:X\to Z$ is Fredholm of index~$0$,
$N:X\to Z$ is $L$–compact on a bounded open set $\Omega\subset X$, and
\begin{equation}\label{eq:mawhin-cond}
\begin{aligned}
&L u \ne \lambda N(u), &&\forall u\in\partial\Omega,\ \lambda\in(0,1],\\[2pt]
&Q N(u)\ne 0, &&\forall u\in\partial\Omega\cap\ker L,
\end{aligned}
\end{equation}
then $\deg(L-N,\Omega,0)$ is well defined and,
if nonzero, there exists $u\in\Omega$ such that $Lu=N(u)$.

\medskip
In our case the verification proceeds as follows:

\begin{enumerate}[label=(\roman*),leftmargin=2em]
\item \emph{$L$ is Fredholm of index~$0$.}
      Established in Section~\ref{sec:operatorL}
      by the Fourier decomposition and the diagonalization of $A_k$.
      Under nonresonance, $L$ is an isomorphism with bounded inverse.

\item \emph{$N$ is $L$–compact on bounded sets.}
      Shown in Section~\ref{subsec:L-compact}.
      In particular, $K\circ N$ is compact from $X$ into $X$.

\item \emph{A priori bounds.}
      Section~\ref{sec:apriori} provided constants $\rho,R>0$ such that
      every solution of $Lu=\lambda N(u)$, $\lambda\in[0,1]$, satisfies the uniform bound 
      $\|u\|_{H^1}\le R$ and no solution exists with $\|u\|_{H^1}=\rho$.
      Thus there are no zeros on the boundary of $\Omega_{R,\rho}$.

\item \emph{Choice of domain.}
      We take
      \[
      \Omega_{R,\rho}
      =\{\,u\in X:\ \rho<\|u\|_{H^1}<R\,\},
      \]
      with $R>\rho$ fixed by the previous bounds.
\end{enumerate}

Since $L$ is invertible, $\ker L=\{0\}$ and the image is the whole $Z$, so the second condition in~\eqref{eq:mawhin-cond} is automatically satisfied. Hence all assumptions of Mawhin’s theorem are fulfilled.

\subsection{Value of the degree and existence of solutions}

Define the homotopy
\begin{equation}\label{eq:mawhin-homotopy}
H_\lambda(u) = L u - \lambda N(u),
\qquad \lambda\in[0,1].
\end{equation}
By the a priori bounds, $H_\lambda(u)=0$
has no solutions on the boundary $\partial\Omega_{R,\rho}$.
Hence $\deg(L-\lambda N,\Omega_{R,\rho},0)$
is well defined for every $\lambda\in[0,1]$ and remains constant in~$\lambda$.

For $\lambda=0$ one has $H_0(u)=Lu$.
Since $L$ is an isomorphism and $0\notin L(\partial\Omega_{R,\rho})$,
\[
\deg(L,\Omega_{R,\rho},0)=\deg(I,\Omega_{R,\rho},0)=1.
\]
By homotopy invariance of the coincidence degree,
\[
\deg(L-N,\Omega_{R,\rho},0)
=\deg(L-\lambda N,\Omega_{R,\rho},0)
=\deg(L,\Omega_{R,\rho},0)
=1.
\]
Therefore there exists $u\in\Omega_{R,\rho}\subset X$
such that $L u = N(u)$.

\subsection{Main existence result}

Summarizing the above arguments, we obtain the following existence result for $D_n$–equivariant choreographies.

\begin{theorem}[Existence of a $D_n$–equivariant choreography]
\label{thm:existence}
Let $0<\alpha<2$, $T>0$, and $\omega=2\pi/T$.
Consider the subspace
\[
X_{D_n}
=\Bigl\{u\in H^2_{\mathrm{per}}([0,T];\R^2):
u(t+\tfrac{T}{n})=R_{2\pi/n}u(t),\
u(-t)=S\,u(t)\Bigr\},
\]
and define the operators
\[
L u = -\ddot u - 2\Omega J\dot u + \Omega^2 u,
\]

\[
(Nu)(t)
= - \sum_{k=1}^{n-1}
\frac{m\,[\,u(t)-R_{2\pi k/n}u(t+\tfrac{kT}{n})\,]}
{\|u(t)-R_{2\pi k/n}u(t+\tfrac{kT}{n})\|^{\alpha+2}},
\]
where
\[
J=\begin{pmatrix}0 & -1 \\ 1 & 0\end{pmatrix}.
\]

If the nonresonance condition
\[
\omega k \ne \pm \Omega
\quad \text{for all } k\in\mathbb Z,
\]
holds, and for any coprime pair $(W,n)$, then there exists $u\in X_{D_n}$ satisfying $L u = N(u)$.
The corresponding motion is a $D_n$–equivariant, $T$–periodic,
collision–free choreography of the $n$–body problem
in the rotating frame.
\end{theorem}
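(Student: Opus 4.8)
The plan is to assemble the pieces established in Sections~\ref{sec:operatorL}--\ref{sec:operatorN} into a single application of Mawhin's coincidence theorem (Theorem~\ref{thm:Mawhin}), in the simplified form valid when $L$ is an isomorphism. First I would fix the data: given a coprime pair $(W,n)$, pick $T>0$ and choose the angular velocity $\Omega$ of the rotating frame so that the nonresonance condition $\omega k \neq \pm\Omega$ holds for all $k\in\ZZ$, with $\omega = 2\pi/T$; this is possible since the forbidden set is countable. I would then work inside the closed symmetric subspace $X_{D_n}\subset H^2_{\mathrm{per}}([0,T];\R^2)$ with the zero--mean constraint, which is invariant under both $L$ and $N$ by Corollary~\ref{cor:isomorphism-L} and the rotation--equivariance of the interaction terms. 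By Proposition~\ref{prop:fredholm-L} and Corollary~\ref{cor:isomorphism-L}, $L$ restricted to $X_{D_n}$ is a Fredholm operator of index zero which is in fact a bounded isomorphism onto $Z_{D_n}=L^2_{\mathrm{per}}\cap X_{D_n}$, with compact inverse $K=L^{-1}$; hence $P=Q=0$ and $K_P=K$, so Mawhin's framework collapses to the Leray--Schauder setting $u = KN(u)$.

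Next I would verify $L$--compactness of $N$: by Proposition~\ref{prop:L-compact} the composition $K\circ N$ is completely continuous on bounded collision--free subsets of $X_{D_n}$, using the uniform separation of Proposition~\ref{prop:uniform-sep-W} to keep the denominators in~\eqref{eq:N-def} bounded away from zero and the compact embedding $H^2_{\mathrm{per}}\hookrightarrow C^1_{\mathrm{per}}$ for the compactness. Then I would invoke the a~priori bounds of Section~\ref{sec:apriori}: from the coercivity estimate of Lemma~\ref{lem:coercivity-L} together with Euler's identity $\langle N(u),u\rangle = -\alpha\,U(u)$ and the bound $|U(u)|\le C_U$ coming from the uniform separation, every solution of the homotopy $Lu = \lambda N(u)$, $\lambda\in[0,1]$, satisfies $\|u\|_{H^1_{\mathrm{per}}}\le R_0$ with $R_0$ independent of $\lambda$. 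Choosing $R>R_0$ and $\rho>0$ small enough that the only solution with $\|u\|_{H^1}$ small is $u=0$ (again using coercivity of $L$ and $N(u)=O(\|u\|)$ near a fixed nondegenerate configuration), I would take the working domain $\Omega_{R,\rho}=\{u\in X_{D_n}: \rho<\|u\|_{H^1}<R\}$, on whose boundary the homotopy has no zeros.

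Finally I would compute the degree along the homotopy $H_\lambda(u) = Lu - \lambda N(u)$: at $\lambda=0$ the map is just $L$, an isomorphism with $0\notin L(\partial\Omega_{R,\rho})$, so $\deg(L,\Omega_{R,\rho},0)=\deg_{LS}(I,\Omega_{R,\rho},0)=1$; homotopy invariance of the coincidence degree then gives $\deg(L-N,\Omega_{R,\rho},0)=1\neq 0$, so there exists $u\in\Omega_{R,\rho}$ with $Lu=N(u)$. Undoing the rotating--frame substitution $q_k(t)=R_{\Omega t}R_{2\pi k/n}u(t+kT/n)$ and invoking Proposition~\ref{prop:gcd} (the coprimality of $W$ and $n$) together with Proposition~\ref{prop:uniform-sep-W} yields the desired collision--free $D_n$--equivariant choreography. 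The step I expect to be the main obstacle is making the a~priori separation $\rho$ genuinely \emph{a~priori} rather than assumed: Proposition~\ref{prop:uniform-sep-W} gives a uniform lower bound $2r_*\sin(\pi/n)$ only once one knows a uniform lower bound $r_*$ on the radial part $\rho(s)=\|u\|$ of the generating curve, and controlling $r_*$ from below along the homotopy --- ruling out that the curve approaches the origin --- is the delicate point that genuinely uses the $D_n$ symmetry and the structure of the equation, not merely the abstract degree machinery.
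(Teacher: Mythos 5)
Your proposal is correct and follows essentially the same route as the paper: isomorphism of $L$ under nonresonance (Proposition~\ref{prop:fredholm-L}, Corollary~\ref{cor:isomorphism-L}), $L$--compactness of $N$ (Proposition~\ref{prop:L-compact}), the coercivity/Euler-identity a~priori bound on the annulus $\Omega_{R,\rho}$, degree $1$ at $\lambda=0$, homotopy invariance, and then Propositions~\ref{prop:gcd} and~\ref{prop:uniform-sep-W} to read off the collision--free choreography. The obstacle you single out at the end --- obtaining the lower bound $r_*$ on the radial part, hence the separation $\rho$, genuinely \emph{a~priori} along the homotopy rather than building it into the working domain --- is exactly the point the paper also leaves at the level of an assumption (see~\eqref{eq:distancia-minima}), so your assessment matches the paper's argument including its weakest step.
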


\begin{proof}
Under nonresonance, $L$ is a Fredholm isomorphism of index~$0$
with bounded inverse $K=L^{-1}$.
The nonlinear operator $N$ is $L$–compact and $C^\infty$
on the collision–free domain,
and the $D_n$ symmetry guarantees uniform separation of particles,
yielding the a~priori bounds of Section~\ref{sec:apriori}.
The homotopy $H_\lambda(u)=L u-\lambda N(u)$
has no zeros on $\partial\Omega_{R,\rho}$,
and since $\deg(L,\Omega_{R,\rho},0)=1$,
Mawhin’s theorem ensures the existence of
$u\in\Omega_{R,\rho}$ such that $L u=N(u)$.

The corresponding configuration generates a $D_n$–equivariant, collision–free choreography where all bodies trace the same closed curve in uniform phase shift.

\end{proof}

\section*{Acknowledgments}
The author wish to thank Prof.~Martha Álvarez-Ramírez for her valuable comments and observations, which greatly contributed to improving the clarity and organization of the manuscript.

\end{document}